\DeclareMathOperator*{\spin}{spin}
\newcommand{\Scal}{{\rm Scal}}
\newcommand{\Ric}{{\rm Ric}}
\newcommand{\str}{{\rm string}}
\DeclareMathAlphabet{\mathpzc}{OT1}{pzc}{m}{it}
\def\P{\mathbf P}
\def\H{\mathbf H}
\def\Z{{\mathbf Z}}
\newtheorem{theorem}{Theorem}[section]
\newtheorem{corollary}[theorem]{Corollary}
\newtheorem{lemma}[theorem]{Lemma}
\newtheorem{proposition}[theorem]{Proposition}
\newtheorem*{thm-main}{Main Theorem}
\newtheorem*{thm-a}{Theorem A}
\newtheorem*{thm-a1}{Corollary A}
\newtheorem*{thm-b}{Theorem B}
\newtheorem*{thm-b1}{Corollary B}
\newtheorem*{thm-c}{Theorem C}
\newtheorem*{thm-d}{Theorem D}
\newtheorem*{conj-d}{Conjecture D}
\newtheorem*{conj-c}{Conjecture C}
\newtheorem{thm}{Theorem}[section]
\theoremstyle{definition}
\newtheorem{definition}[thm]{Definition}
\newtheorem{Nremark}[thm]{Remark}
\newtheorem*{remark}{Remark}
\begin{document}

\author{Boris Botvinnik} 
\address{Department of Mathematics
  University of Oregon
  Eugene OR 97403-1222, U.S.A.}
\email{botvinn@math.uoregon.edu}

\author{Mohammed Labbi} \address{Department of Mathematics College of
  Science University of Bahrain 32038, Bahrain.}
\email{mlabbi@uob.edu.bh} \title{Compact manifolds with positive
  $\Gamma_2$-curvature} \date{} \subjclass[2010]{Primary 53C20, 57R90;
  Secondary 81T30} \thanks{The authors would like to thank S. Hoelzel
  for early communication of his results and for useful critical
  comments on the previous version of this paper.}
\subjclass[2010]{Primary 53C20, 57R90; Secondary 81T30} \maketitle
\begin{abstract} 
The Schouten tensor \ $A$ \ of a Riemannian manifold \ $(M,g)$
\ provides important scalar curvature invariants
$\sigma_k$,  that are the  symmetric functions on the eigenvalues of
$A$, where, in particular, $\sigma_1$ \ coincides with the standard
scalar curvature \ $\Scal(g)$. Our goal here is to study compact
manifolds with positive \ $\Gamma_2$-curvature, \ i.e.,
when $\sigma_1(g)>0$ and $\sigma_2(g)>0$.  In particular, we prove
that a 3-connected non-string manifold $M$ admits a positive
$\Gamma_2$-curvature metric if and only if it admits a positive
scalar curvature metric. Also we show that any
finitely presented group $\pi$ can always be realised
as the fundamental group of a closed manifold of positive
$\Gamma_2$-curvature and of arbitrary dimension greater
than or equal to six.
\end{abstract}

\section{Introduction and statement of the results}
\subsection{Motivation}
Let $(M,g)$ be a Riemannian manifold of dimension $n\geq 3$. Recall
that the Riemann curvature tensor $R$ of $(M,g)$ decomposes into
$$
R=W+gA,
$$
where $W$ is the trace-free part of $R$, that
is, the Weyl tensor, and the product $gA$ is the
Kulkarni-Nomizu product of the metric $g$ and the
Schouten tensor $A$. The latter is defined by
$$ 
A=\frac{1}{n-2}\left(\Ric -\frac{\Scal}{2(n-1)}g\right).
$$
Here $\Ric$ and $\Scal$ are respectively the Ricci
curvature tensor and the scalar curvature of $(M,g)$.

Let $\lambda_1,\ldots,\lambda_n$ denote the eigenvalues of the
operator associated to $A$ via the metric $g$. For $1\leq k\leq n$,
the \emph{$\sigma_k$-curvature} of $(M,g)$ is defined to be the scalar
function
$$
\sigma_k=\sigma_k(A)=\sum_{1\leq i_1 <...<i_k\leq n}\lambda_{i_1}...\lambda_{i_k}.
$$ 
In this paper, we concentrate on the
$\sigma_2$-curvature; more specifically, we study its
positivity properties.  First we note that
$$
\sigma_1^2=2\sigma_2+\|A\|^2.
$$ In particular, the positivity of $\sigma_2$ implies that
$\sigma_1^2$ is never zero. This forces $\sigma_1$ to
have a constant sign on $(M,g)$.  In other words, the
condition $\sigma_2>0$ has two possibilities:
$\sigma_2>0$ and $\sigma_1>0$ or $\sigma_2>0$ and $\sigma_1 <0$.
\begin{definition}
We say that $(M,g)$ has \emph{positive
  $\Gamma_2$-curvature} if $\sigma_2>0$ and $\sigma_1>0$.
\end{definition}
Here is the main question we would like to resolve:

{\bf Question 1:} Which manifolds admit metrics with positive
$\Gamma_2$-curvature?

\subsection{Main results on the existence of positive 
$\Gamma_2$-curvature} It is important to emphasize that the positivity
of $\Gamma_2$-curvature originally appeared as an ellipticity
assumption that ensures that the $\sigma_2$-Yamabe equation is
elliptic at any solution; see for instance \cite{GVW, Viac}.  There
are deep and interesting results concerning the $\sigma_2$-Yamabe
problem, which inspired great interest from the mathematics community
in the curvatures associated with the Shouten tensor.

Our first main result is an affirmative answer to Question 1 under
certain topological restrictions on $M$. Let $p_k(M)$
stand for the $k$-th Pontryagin class of the manifold $M$.  We recall
that a smooth spin manifold $M$ is a \emph{string
  manifold} if $\frac{1}{2}p_1(M)=0$. Otherwise, we
say that $M$ is \emph{not string}.
\begin{thm-a} 
  Let $M$ be a compact $3$-connected non-string manifold with $\dim
  M=n\geq 9$ which admits a metric of positive scalar curvature. Then
  $M$ admits a Riemannian metric $g$ with positive
  $\Gamma_2$-curvature.
\end{thm-a}
We notice that such a manifold is spin and determines a cobordism
class $[M]\in \Omega^{\spin}_n$. In particular, the index map $\alpha:
\Omega^{\spin}_n\to KO_n$ gives an element $\alpha([M])\in
KO_n$. Then, according to Gromov-Lawson \cite{GroLaw} and Stolz
\cite{Stolz1}, such a manifold $M$ admits a positive scalar curvature
metric if and only if $\alpha([M])=0$.  Thus, Theorem
A provides an affirmative answer to the existence question as follows:
\begin{thm-a1}
A compact $3$-connected non-string manifold $M$ with
$\dim M=n\geq 9$ admits a metric of positive $\Gamma_2$-curvature if
and only if $\alpha([M])=0$ in $KO_n$.
\end{thm-a1}
In the case when a manifold $M$ is string, we have
  the following  result:
\begin{thm-b}
A compact $3$-connected string manifold $M$ of dimension $n\geq 9$
that is string cobordant to a manifold of positive
$\Gamma_2$-curvature admits a metric with positive
$\Gamma_2$-curvature.
\end{thm-b}
We recall that a string manifold $M$ determines a cobordism class
$[M]\in \Omega^{\str}_n$. We denote by $\phi_W : \Omega^{\str}_*\to
\Z[[q]]$, the Witten genus; see \cite{Dessai,Stolz1}. We prove the
following result which is analogous to \cite[Corollary B]{GroLaw}.
\begin{thm-b1}\label{main-2}
Let $M$ be a $3$-connected string manifold of
dimension $n\geq 9$. Assume $\phi_W([M])=0$. Then some multiple
$M\# \cdots \# M$ carries a metric of positive
$\Gamma_2$-curvature.
\end{thm-b1}
It is well-known that some curvature conditions (such as positivity of
Ricci curvature) impose severe restrictions on the fundamental group
of a manifold. We show that the positivity of $\Gamma_2$-curvature
does not require any restrictions. We prove the following:
\begin{thm-c}\label{fund-group} Let $\pi$ be a finitely presented group.
Then for every $n\geq 6$, there exists a compact $n$-manifold M with
 positive $\Gamma_2$-curvature such that
$\pi_1(M)=\pi$.
\end{thm-c}
\subsection{Plan of the paper}
In Section \ref{sec:positivity}, we describe close relationships
between positivity of the Einstein tensor and other curvatures and
positivity of the $\Gamma_2$-curvature, which provides relevant
techniques and ideas to prove the above results. In Section
\ref{sec:submersion}, we examine the $\Gamma_2$-curvature on a
Riemannian submersion. In particular, we show that a geometric
$\H\P^2$-bundle carries a metric with positive $\Gamma_2$-curvature.
We prove the main results in Section \ref{sec:proofs}. In Section
\ref{sec:general}, we describe a general approach to the stability of some curvature conditions under surgery
developed by S. Hoelzel \cite{Hoelzel} and we derive some
applications. In particular, we show that any finitely presented group
can be realized as a fundamental group of an $n$-manifold carrying a
metric with positive $\Gamma_k$-curvature, provided that $2\leq
k<\frac{n+1}{2}-\frac{1}{2}\sqrt{n-\frac{1}{n-1}}$.

\section{Positivity of $\Gamma_2$ and related curvatures}\label{sec:positivity}
\subsection{Positive $\Gamma_2$-curvature and positive Einstein curvature}
Here, we will explain that the ellipticity condition
for the $\sigma_2$-Yamabe equation is closely related
to the positive definiteness of the Einstein tensor
\[
S=\frac{\Scal}{2}g-\Ric.
\]
We notice first that from an algebraic view point,
the Einstein tensor is the first Newton transformation of Schouten
tensor $A$ as follows:
\begin{lemma}
The Einstein tensor $S$ is the image of the Schouten tensor $A$ under
the first Newton transformation: 
\begin{equation}
S=(n-2)\left( \sigma_1(A)g-A\right).
\end{equation}
\end{lemma}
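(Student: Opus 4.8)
The plan is to verify the identity $S=(n-2)(\sigma_1(A)g-A)$ by unwinding both the definition of the Einstein tensor and the definition of the Schouten tensor, and then comparing the two sides directly as symmetric $2$-tensors (equivalently, as endomorphisms of the tangent space via the metric $g$). The key observation driving the computation is that $\sigma_1(A)=\tr A$, which equals the sum of eigenvalues $\lambda_1+\cdots+\lambda_n$. So the first step is to compute $\sigma_1(A)$ explicitly in terms of $\Scal$. Taking the trace of $A=\frac{1}{n-2}\bigl(\Ric-\frac{\Scal}{2(n-1)}g\bigr)$ and using $\tr\Ric=\Scal$ and $\tr g=n$ gives
\begin{equation}
\sigma_1(A)=\tr A=\frac{1}{n-2}\left(\Scal-\frac{n\,\Scal}{2(n-1)}\right)=\frac{\Scal}{2(n-1)}.
\end{equation}

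Next I would substitute this value of $\sigma_1(A)$ together with the definition of $A$ into the right-hand side $(n-2)(\sigma_1(A)g-A)$ and simplify. The factor $(n-2)$ is engineered precisely to cancel the $\frac{1}{n-2}$ normalization in $A$, so the computation collapses cleanly:
\begin{equation}
(n-2)\bigl(\sigma_1(A)\,g-A\bigr)=(n-2)\sigma_1(A)\,g-\left(\Ric-\frac{\Scal}{2(n-1)}g\right).
\end{equation}
Now I would plug in $\sigma_1(A)=\frac{\Scal}{2(n-1)}$ and $(n-2)\sigma_1(A)=\frac{(n-2)\Scal}{2(n-1)}$, then collect the two terms proportional to $g$. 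The coefficient of $g$ becomes $\frac{(n-2)\Scal}{2(n-1)}+\frac{\Scal}{2(n-1)}=\frac{(n-1)\Scal}{2(n-1)}=\frac{\Scal}{2}$, and the remaining term is $-\Ric$. This yields exactly $\frac{\Scal}{2}g-\Ric=S$, which is the claimed identity.

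The whole argument is a linear algebra bookkeeping exercise, so there is no genuine obstacle; the only point requiring care is tracking the numerical coefficients so that the $\frac{1}{2(n-1)}$ and $\frac{1}{n-2}$ factors combine correctly, and verifying that the trace normalizations ($\tr g=n$, $\tr\Ric=\Scal$) are applied consistently. If one wished to emphasize the structural content, I would remark that this is the first Newton transformation $T_1(A)=\sigma_1(A)g-A$ of the symmetric endomorphism associated to $A$, and the factor $(n-2)$ simply reflects the normalization chosen for the Schouten tensor; the lemma is then an instance of the general fact that Newton transformations of $A$ produce the natural curvature tensors (here, the Einstein tensor), and no further input beyond the definitions is needed.
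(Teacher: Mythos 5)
Your proof is correct and follows essentially the same route as the paper: compute $\sigma_1(A)=\tr A=\frac{\Scal}{2(n-1)}$, substitute into $\sigma_1(A)g-A$ together with the definition of $A$, and collect the $g$-terms to recover $\frac{\Scal}{2}g-\Ric$. The only difference is that you make the trace computation explicit, which the paper leaves implicit.
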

\begin{proof} Indeed, we have:
\begin{equation*}
 \sigma_1(A)g-A=\frac{\Scal}{2(n-1)}g-\frac{1}{n-2}\left(\Ric
 -\frac{\Scal}{2(n-1)}g\right)=\frac{1}{n-2}\left(
 \frac{\Scal}{2}g-\Ric\right).
\end{equation*}
This proves it.
\end{proof}
The next proposition shows that positivity of $\Gamma_2$-curvature
implies positive definiteness of the Einstein tensor:
\begin{proposition}\label{weaker}
Let $n\geq 3$. Assume that a Riemannian $n$-manifold $(M,g)$ has
positive $\Gamma_2$-curvature. Then its Einstein tensor $S$ is
positive definite.  In particular, if $n=3$, positive
$\Gamma_2$-curvature implies positive sectional curvature.
\end{proposition}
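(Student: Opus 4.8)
The plan is to reduce positive definiteness of $S$ to a single inequality among the eigenvalues $\lambda_1,\dots,\lambda_n$ of $A$, and then to read that inequality off the identity $\sigma_1^2 = 2\sigma_2 + \|A\|^2$ stated in the introduction.

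First I would apply the preceding Lemma, which gives $S = (n-2)\left(\sigma_1(A)g - A\right)$. Since $g$ and $A$ are simultaneously diagonalizable, in an orthonormal eigenbasis of $A$ the operator $\sigma_1 g - A$ is diagonal with entries $\sigma_1 - \lambda_i$. As $n\geq 3$ makes $n-2>0$, positive definiteness of $S$ is equivalent to $\sigma_1 - \lambda_i > 0$ for every $i$; so the statement reduces to the purely algebraic claim $\lambda_i < \sigma_1$ for each $i$.

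The core step is then immediate from the identity. Writing $\|A\|^2 = \sum_j \lambda_j^2$, positivity of $\sigma_2$ yields $\sum_j \lambda_j^2 = \sigma_1^2 - 2\sigma_2 < \sigma_1^2$. Hence for each fixed $i$ we have $\lambda_i^2 \leq \sum_j \lambda_j^2 < \sigma_1^2$, and since $\sigma_1 > 0$ this forces $|\lambda_i| < \sigma_1$, in particular $\sigma_1 - \lambda_i > 0$. This proves that $S$ is positive definite; the whole content is this one-line consequence of the identity, so I do not expect a genuine obstacle at this stage.

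For $n=3$ I would translate the conclusion into sectional curvature. In dimension three the Weyl tensor vanishes, so $R = gA$; in an eigenbasis of $A$ the resulting curvature operator is diagonal, and using the standard dimension-three relations between $\Ric$, $\Scal$ and the sectional curvatures one finds $K_{ij} = \lambda_i + \lambda_j$. Since $\lambda_i + \lambda_j = \sigma_1 - \lambda_k$ for $\{i,j,k\}=\{1,2,3\}$, the three sectional curvatures are exactly the eigenvalues $\sigma_1 - \lambda_k$ of $\sigma_1 g - A$, i.e.\ the rescaled eigenvalues of $S$. Thus positive definiteness of $S$ is literally positivity of all sectional curvatures, because every $2$-plane in dimension three is spanned by a decomposable $2$-vector, so positivity of the diagonalized curvature operator gives positivity of every sectional curvature. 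The only place demanding care is this last bookkeeping, identifying $K_{ij}$ with the correct eigenvalue of $S$; everything else is routine.
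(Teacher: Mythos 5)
Your proof is correct, but it takes a genuinely different and more elementary route than the paper. The paper decomposes $A=A_1+\frac{\sigma_1}{n}g$ into its trace-free and pure-trace parts, bounds $|A_1(X,X)|\leq\sqrt{\tfrac{n-1}{n}}\,\|A_1\|$ by a Lagrange-multiplier argument, and then applies an arithmetic--geometric mean step to arrive at the explicit eigenvalue lower bound $t_1(A)\geq \sigma_2/\sigma_1$, i.e.\ $S\geq (n-2)\sigma_2/\sigma_1$; this quantitative bound is the point of the paper's proof, since it is recorded as a separate proposition immediately afterwards. You instead read everything off the identity $\sigma_1^2=2\sigma_2+\|A\|^2$: positivity of $\sigma_2$ gives $\lambda_i^2\leq\|A\|^2<\sigma_1^2$, hence $|\lambda_i|<\sigma_1$ and $\sigma_1-\lambda_i>0$, which is all that positive definiteness of $S=(n-2)(\sigma_1 g-A)$ requires. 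This is shorter and avoids the optimization step entirely; it even yields the sharper bound $\sigma_1-\lambda_i\geq\sigma_1-\sqrt{\sigma_1^2-2\sigma_2}=\frac{2\sigma_2}{\sigma_1+\sqrt{\sigma_1^2-2\sigma_2}}\geq\frac{\sigma_2}{\sigma_1}$ had you cared to extract it, so nothing is lost relative to the paper. Your treatment of the $n=3$ case (Weyl tensor vanishes, $R=gA$, the sectional curvatures $K_{ij}=\lambda_i+\lambda_j=\sigma_1-\lambda_k$ are exactly the eigenvalues of $\sigma_1 g-A=S$, and every $2$-plane in dimension three is a coordinate plane of some eigenbasis decomposition since all $2$-vectors are decomposable) is a detail the paper leaves implicit, and it is correctly carried out.
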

\begin{remark}
The previous proposition is a special case of a general result, see
Lemma \ref{CNS1} below and \cite[Proposition 1.1]{CNS}.
Below, we provide a direct proof of Proposition
\ref{weaker}. In particular, we obtain a lower bound for the Einstein
tensor. This proof is inspired by the proof of a similar result in
dimension $4$ in \cite{CGY}.
\end{remark}
\begin{proof}[Proof of Proposition \ref{weaker}]
We show that if $\sigma_1(A)>0$ and $\sigma_2(A)>0$, then the Einstein
tensor is positive definite.  Assume that $\sigma_1(A)>0$ and write
$A=A_1+\frac{\sigma_1(A)}{n}g$ where $A_1=A-\frac{\sigma_1(A)}{n}g$ is
a trace-free tensor.  Then the first Newton transformation is given as
$$
\begin{array}{c}
t_1(A)=\sigma_1(A)g-A=\frac{n-1}{n}\sigma_1g-A_1.
\end{array}
$$ 
Now let $X$ be a unit vector. Then we have:
$$ 
\begin{array}{c}
t_1(A)(X,X)=\frac{n-1}{n}\sigma_1-A_1(X,X)\geq
\frac{n-1}{n}\sigma_1-|A_1(X,X)|.
\end{array}
$$ 
Next, since $A_1$ is trace free, a simple Lagrange multiplier
argument shows that
\[
\begin{array}{c}
|A_1(X,X)|\leq \frac{\sqrt{n-1}}{\sqrt{n}}||A_1||.
\end{array}
\]
Therefore, we obtain
\begin{equation*}
\begin{array}{lcl}
 t_1(A)(X,X)&\geq &
 \frac{n-1}{n}\sigma_1-\frac{\sqrt{n-1}}{\sqrt{n}}||A_1|| \\ \\ &\geq
 & \frac{n-1}{n}\sigma_1-2\left(
 \frac{||A_1||}{\sqrt{2}\sqrt{\sigma_1}}\frac{\sqrt{\sigma_1}\sqrt{n-1}}{\sqrt{2}\sqrt{n}}\right)
 \\ \\ &\geq & \frac{n-1}{n}\sigma_1-
 \frac{||A_1||^2}{2\sigma_1}-\frac{\sigma_(n-1)}{2n} \\ \\ & = &
 \frac{n-1}{2n}\sigma_1- \frac{||A_1||^2}{2\sigma_1} \\ \\ &=&
 \bigl(\frac{n-1}{n}\sigma_1^2-||A_1||^2\bigr)\frac{1}{2\sigma_1}=
\frac{2\sigma_2}{2\sigma_1}=\frac{\sigma_2}{\sigma_1}.
\end{array}
\end{equation*}
Thus, $ t_1(A)\geq \frac{\sigma_2}{\sigma_1}$.
\end{proof}
As a direct consequence of the previous proof, we
have:
\begin{proposition}
Let $(M,g)$ be a Riemannian manifold of dimension
$n\geq 3$ and positive scalar curvature. Then the
  first eigenvalue of the Einstein tensor is bounded from below by
  $(n-2)\frac{\sigma_2}{\sigma_1}$.
\end{proposition}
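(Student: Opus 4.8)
The plan is to read this off directly from the computation already carried out in the proof of Proposition \ref{weaker}, after observing that that computation never genuinely used the hypothesis $\sigma_2>0$. First I would record that under the positive scalar curvature assumption we automatically have $\sigma_1(A)=\frac{\Scal}{2(n-1)}>0$; this follows by taking the trace in the defining formula for $A$ (equivalently, it is the content of the preceding Lemma). Positivity of $\sigma_1$ is the only input I will use. Next I would recall, from the Lemma identifying the Einstein tensor with the first Newton transformation, that $S=(n-2)\bigl(\sigma_1(A)g-A\bigr)=(n-2)\,t_1(A)$. Since $S$ is symmetric, its smallest eigenvalue is the minimum of the quadratic form over unit vectors, so $\lambda_{\min}(S)=(n-2)\min_{|X|=1}t_1(A)(X,X)$, the factor $n-2>0$ passing harmlessly through the minimum for $n\geq 3$.

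The key step is the observation that the chain of inequalities in the proof of Proposition \ref{weaker} establishes, for \emph{every} unit vector $X$, the pointwise bound $t_1(A)(X,X)\geq \frac{\sigma_2}{\sigma_1}$, and that this derivation needs only $\sigma_1>0$. Indeed, $\sigma_1>0$ is exactly what licenses the arithmetic--geometric mean step $\frac{\sqrt{n-1}}{\sqrt{n}}\|A_1\|\leq \frac{\|A_1\|^2}{2\sigma_1}+\frac{(n-1)\sigma_1}{2n}$ and the concluding division by $\sigma_1$, while the identity $\frac{n-1}{n}\sigma_1^2-\|A_1\|^2=2\sigma_2$ is a purely algebraic consequence of $\sigma_1^2=2\sigma_2+\|A\|^2$ together with the orthogonal splitting $A=A_1+\frac{\sigma_1}{n}g$ (using $\|A\|^2=\|A_1\|^2+\frac{\sigma_1^2}{n}$). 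Taking the minimum over unit $X$ and multiplying by $n-2$ then yields $\lambda_{\min}(S)\geq (n-2)\frac{\sigma_2}{\sigma_1}$, which is precisely the assertion.

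There is no substantive obstacle here: the entire point is that the estimate of Proposition \ref{weaker} remains valid under the weaker hypothesis $\sigma_1>0$ alone, so the present statement is a direct corollary of that proof. The only step demanding any care is the verification that no line of the earlier argument silently invoked $\sigma_2>0$; a quick audit of the displayed inequalities confirms that it does not, and that $\sigma_2$ enters only at the very end through the algebraic identity above.
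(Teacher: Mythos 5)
Your proof is correct and is essentially the paper's own argument: the paper presents this proposition precisely as "a direct consequence of the previous proof," i.e.\ the chain of inequalities establishing $t_1(A)(X,X)\geq \sigma_2/\sigma_1$ uses only $\sigma_1>0$, and multiplying by $n-2$ via $S=(n-2)\,t_1(A)$ gives the stated eigenvalue bound. Your audit confirming that $\sigma_2>0$ is never invoked in that derivation is exactly the point.
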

On the other hand, positivity of the Einstein tensor does not imply
positivity of the $\Gamma_2$-curvature.  Moreover, there are examples
of metrics with positive Einstein tensor that cannot be conformally
deformed to a metric with positive $\Gamma_2$-curvature.
\vspace{5mm}

Let $n\geq 2$, $(S^{n+2},g_0)$ be the standard sphere of curvature
$+1$, and $(M^n,g_1)$ be a compact space form of curvature $-1$.  We
define $g$ to be the Riemannian product of $g_0$ and $g_1$ on $S^{n+2}
\times M^n$.
 \begin{proposition}
Let $(S^{n+2} \times M^n,g)$ be as above with $n\geq 2$. Then 
\begin{enumerate}
\item[{\bf (1)}] $\Scal(g)>0$ and the Einstein tensor $S(g)$ is
  positive definite;
\vspace{1mm}

\item[{\bf (2)}] there is no metric $\bar g$ in the conformal class
  $[g]$ with positive $\Gamma_2$-curvature.
\end{enumerate}
\end{proposition}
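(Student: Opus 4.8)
The plan is to handle the two parts separately: part (1) is a direct curvature computation for the product metric, while for part (2) I would argue by contradiction, evaluating the $\sigma_k$ of a hypothetical conformal metric at a \emph{minimum} of its conformal factor and extracting an algebraic contradiction from the block structure of the Schouten tensor. For part (1), I first record the sectional curvatures of the product $g=g_0\oplus g_1$ on $S^{n+2}\times M^n$: planes tangent to the sphere factor have curvature $+1$, planes tangent to $M^n$ have curvature $-1$, and mixed planes have curvature $0$. Summing, $\Ric$ is diagonal in the product splitting with eigenvalue $n+1$ on the $S^{n+2}$-directions and $-(n-1)$ on the $M^n$-directions, so that $\Scal(g)=(n+2)(n+1)-n(n-1)=2(2n+1)>0$. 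The Einstein tensor $S=\frac{\Scal}{2}g-\Ric$ is then diagonal with eigenvalues $\frac{\Scal}{2}-(n+1)=n$ on the sphere block and $\frac{\Scal}{2}+(n-1)=3n$ on the $M^n$ block, both positive, so $S$ is positive definite.

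For part (2) set $N=2n+2=\dim(S^{n+2}\times M^n)$. Using $\sigma_1(A)=\frac{\Scal}{2(N-1)}=1$ one finds that the Schouten tensor $A=\frac{1}{N-2}\bigl(\Ric-\frac{\Scal}{2(N-1)}g\bigr)$ is diagonal with eigenvalue $+\tfrac12$ (multiplicity $n+2$) on the sphere block and $-\tfrac12$ (multiplicity $n$) on the $M^n$ block; in particular $\sigma_2(A)=\frac{1-n}{4}<0$, so $g$ itself is not of positive $\Gamma_2$-curvature. Suppose, for contradiction, that $\bar g=e^{2u}g$ has positive $\Gamma_2$-curvature. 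Since $\sigma_k(\bar g)=e^{-2ku}\sigma_k(g^{-1}\bar A)$, the hypotheses $\sigma_1(\bar g)>0$ and $\sigma_2(\bar g)>0$ are equivalent to positivity of the $\sigma_1,\sigma_2$ of the endomorphism $g^{-1}\bar A$, where by the standard conformal law $\bar A=A-\Hess u+du\otimes du-\frac12|\nabla u|^2g$.

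I would then evaluate everything at a point $p_0$ where $u$ attains its minimum. There $\nabla u=0$ and $\Hess u\succeq 0$, hence $\bar A=A-\Hess u\preceq A$. Writing $\bar A$ in blocks $P,Q$ (the restrictions to the sphere- and $M^n$-directions) with off-diagonal part $B$, the relations $A|_{S}=\tfrac12 I_{n+2}$, $A|_{M}=-\tfrac12 I_n$ give $P\preceq\tfrac12 I_{n+2}$ and $Q\preceq-\tfrac12 I_n$; thus with $p:=\tr P$, $s:=-\tr Q$ one has $p\le\tfrac{n+2}{2}$, $s\ge\tfrac n2$, and positivity of $\sigma_1(\bar A)=p-s$ forces $p>s$. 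The main computation is the block identity
\[
\sigma_2(\bar A)=\sigma_2(P)+\sigma_2(Q)+\sigma_1(P)\sigma_1(Q)-|B|^2,
\]
combined with the Cauchy--Schwarz bounds $\sigma_2(P)\le\frac{n+1}{2(n+2)}p^2$ and $\sigma_2(Q)\le\frac{n-1}{2n}s^2$ and $\sigma_1(P)\sigma_1(Q)=-ps$, which yield
\[
\sigma_2(\bar A)\le f(p,s):=\tfrac{n+1}{2(n+2)}p^2-ps+\tfrac{n-1}{2n}s^2 .
\]

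The crux, and the step I expect to be the real obstacle, is to show $f(p,s)<0$ on the admissible region $\tfrac n2\le s<p\le\tfrac{n+2}2$. Writing $f(p,s)=s^2g(p/s)$ with $g(t)=\frac{n+1}{2(n+2)}t^2-t+\frac{n-1}{2n}$, the constraints force $t=p/s\in(1,\tfrac{n+2}{n}]$, and a direct evaluation gives $g(1)=-\frac{n+1}{n(n+2)}<0$ and $g\bigl(\tfrac{n+2}{n}\bigr)=-\frac{n-1}{n^2}<0$ for $n\ge 2$. As $g$ is an upward parabola negative at both endpoints, it is negative on the whole interval, so $f(p,s)<0$; hence $\sigma_2(\bar A)(p_0)<0$, contradicting $\sigma_2(\bar g)>0$. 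This is precisely where the specific dimensions $n+2$ and $n$ of the two factors enter decisively: the sphere factor is too small to rescue positivity of $\sigma_2$ against the negative eigenvalues that $\Hess u\succeq 0$ forces onto the hyperbolic block at a minimum of $u$.
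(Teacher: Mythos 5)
Your proposal is correct, and for part (2) it takes a genuinely different route from the paper. The paper's proof is very short: part (1) is dismissed as ``not difficult to check'' (your explicit computation $\Scal(g)=2(2n+1)$, Einstein eigenvalues $n$ and $3n$, is exactly that check), and for part (2) the paper observes that $g$ is locally conformally flat --- hence so is every metric in $[g]$ --- and then invokes the Guan--Lin--Wang theorem that a compact locally conformally flat $(2n+2)$-manifold with positive $\Gamma_2$-curvature must have vanishing $n$-th Betti number, which fails for $S^{n+2}\times M^n$ since $b_n(S^{n+2}\times M^n)=b_n(M^n)\neq 0$. Your argument instead works pointwise at a minimum of the conformal factor: the conformal transformation law, $\Hess u\succeq 0$, the block formula $\sigma_2(\bar A)=\sigma_2(P)+\sigma_2(Q)+\sigma_1(P)\sigma_1(Q)-|B|^2$, the Maclaurin bounds on each block, and the elementary analysis of the convex quadratic $g(t)$ on $(1,\tfrac{n+2}{n}]$ all check out (I verified $g(1)=-\tfrac{n+1}{n(n+2)}$ and $g(\tfrac{n+2}{n})=-\tfrac{n-1}{n^2}$). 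What the paper's route buys is brevity and a topological explanation of the obstruction; what yours buys is self-containedness --- it needs no citation, does not use conformal flatness at all, and sidesteps the orientability of $M^n$ that is implicitly needed to guarantee $b_n(M^n)\neq 0$ in the paper's argument. Both are valid proofs.
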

\begin{proof}
It is not difficult to check that $\Scal(g)>0$ and the Einstein tensor
$S(g)$ is positive definite.  The metric $g$ is conformally flat, and
therefore, any metric in the conformal class of $g$ is conformally
flat as well. Then \cite[Theorem 1.1]{GLW} asserts that the existence
of a conformally flat metric with positive $\Gamma_2$-curvature on a
compact manifold of dimension $2n+2$ implies the vanishing of its
$n$-th Betti number. However, the $n$-th Betti number of $S^{n+2}
\times M^n$ is the same one as for $M^n$, which cannot be equal to
zero.
\end{proof}
Thus we see that positivity of the
$\Gamma_2$-curvature of a Riemannian metric is
a rather strong condition since it
implies positivity of the Einstein tensor in all dimensions at least
three. In dimension $3$, it
guarantees positivity of sectional curvature, and in dimension
$4$, it implies positivity of Ricci curvature.

On the other hand, as we shall show in the sequel to
this paper, there are very simple examples, such as the product of
spheres $S^3\times S^{q}$, with $q\geq 2$, which admit a Riemannian
metric that has non-negative sectional curvature,
positive Ricci curvature and positive definite Einstein tensor,
but its
$\sigma_2$-curvature is negative. We
address here the following natural question:

{\bf Question 2.} Which curvature conditions imply positive
$\Gamma_2$-curvature?

Clearly, positivity of the Schouten tensor implies positivity of
$\Gamma_2$-curvature, however, this condition is too strong. In fact,
positivity of the Schouten tensor is equivalent to positivity of all
$\sigma_k$-curvatures for $1\leq k\leq n$.  In the next
proposition, we give much weaker conditions
that imply positivity of $\Gamma_2$-curvature:
\begin{proposition}
Let $n=2k+2$ be even and $(M,g)$ be a Riemannian $n$-manifold such
that the sum of the smallest $k+1$ eigenvalues of the Schouten tensor
is positive. Then the metric $g$ has positive $\Gamma_2$-curvature.
\end{proposition}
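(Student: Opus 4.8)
The plan is to prove both inequalities $\sigma_1>0$ and $\sigma_2>0$ directly from the eigenvalue hypothesis. Order the eigenvalues of the Schouten tensor as $\lambda_1\le\cdots\le\lambda_n$ and set $c=\lambda_1+\cdots+\lambda_{k+1}$ and $t=\lambda_{k+1}$. First I would record the elementary consequences of the hypothesis: since $t=\lambda_{k+1}\ge\frac{1}{k+1}(\lambda_1+\cdots+\lambda_{k+1})=\frac{c}{k+1}>0$, the top $k+1$ eigenvalues $\lambda_{k+1},\dots,\lambda_n$ are all at least $t$ and hence strictly positive. In particular $\sigma_1=\sum_i\lambda_i>0$, because the first block equals $c>0$ and the remaining block $\lambda_{k+2}+\cdots+\lambda_n$ is positive. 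The same ordering gives the reverse estimate $c=\sum_{i\le k+1}\lambda_i\le(k+1)t$, which I will need later.

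Next I would split $\sigma_2$ according to the partition $L=\{1,\dots,k+1\}$, $B=\{k+2,\dots,n\}$, both of size $k+1$ since $n=2k+2$, writing
$$
\sigma_2=\sigma_2(\lambda_L)+\Bigl(\sum_{i\in L}\lambda_i\Bigr)\Bigl(\sum_{j\in B}\lambda_j\Bigr)+\sigma_2(\lambda_B),
$$
where $\sigma_2(\lambda_L)$ and $\sigma_2(\lambda_B)$ denote the second elementary symmetric functions of the two blocks. The contributions coming from $B$ are easy to bound from below: every $\lambda_j$ with $j\in B$ satisfies $\lambda_j\ge t>0$, so the cross term is at least $c\cdot(k+1)t$ (here $c>0$ preserves the direction of the inequality) and $\sigma_2(\lambda_B)\ge\binom{k+1}{2}t^2$.

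The only term that can be negative is $\sigma_2(\lambda_L)$, and controlling it is the crux. Here I would substitute $\lambda_i=t-x_i$ with $x_i\ge0$ for $i\in L$, which is legitimate since $\lambda_i\le\lambda_{k+1}=t$; then $\sum_{i\in L}x_i=(k+1)t-c\ge0$. Using the elementary inequality $\sum x_i^2\le(\sum x_i)^2$ one obtains an upper bound on $\sum_{i\in L}\lambda_i^2$, and hence, via $\sigma_2(\lambda_L)=\tfrac12\bigl(c^2-\sum_{i\in L}\lambda_i^2\bigr)$, the lower bound
$$
\sigma_2(\lambda_L)\ \ge\ \tfrac{kt}{2}\bigl(2c-(k+1)t\bigr).
$$
Feeding the three estimates into the splitting and simplifying, the $t^2$-terms cancel and I expect to land on
$$
\sigma_2\ \ge\ \tfrac{kt}{2}\bigl(2c-(k+1)t\bigr)+c(k+1)t+\tfrac{k(k+1)}{2}t^2=(2k+1)\,c\,t,
$$
which is strictly positive because $c>0$ and $t>0$. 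Combined with $\sigma_1>0$, this establishes positive $\Gamma_2$-curvature.

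The main obstacle is precisely the estimate on $\sigma_2(\lambda_L)$: a priori the small eigenvalues can be very negative, making $\sum_{i\in L}\lambda_i^2$ large. The decisive point is that the ordering constraint forces $t=\lambda_{k+1}$ to grow in compensation, so that the gain from the $B$-block (its positivity and its lower bound $\lambda_j\ge t$) outweighs any loss; the quantitative heart of the argument is pitting $\sum x_i^2\le(\sum x_i)^2$ against the size of the $B$-eigenvalues. Before finalizing I would double-check the degenerate case $k=1$ (that is, $n=4$), where the binomial coefficient $\binom{k+1}{2}$ reduces to a single pair, to confirm that the combinatorial constants are used correctly.
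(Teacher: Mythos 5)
Your proof is correct, but it takes a genuinely different route from the paper's. The paper expresses $\sigma_2$ via the exterior product of double forms as a positive multiple of
\[
\sum_{1\leq i_1<\cdots<i_{k+1}\leq n}\bigl(\lambda_{i_1}+\cdots+\lambda_{i_{k+1}}\bigr)\bigl(\sigma_1-\lambda_{i_1}-\cdots-\lambda_{i_{k+1}}\bigr),
\]
a sum over \emph{all} $(k+1)$-subsets in which every factor is a sum of $k+1$ eigenvalues and hence is bounded below by the sum of the $k+1$ smallest ones, which is positive by hypothesis; positivity of $\sigma_2$ is then immediate term by term, with no within-block contributions to control. You instead fix the single partition into the bottom and top blocks, use the elementary splitting $\sigma_2=\sigma_2(\lambda_L)+(\sum_L\lambda)(\sum_B\lambda)+\sigma_2(\lambda_B)$, and must then fight the possibly negative term $\sigma_2(\lambda_L)$ via the substitution $\lambda_i=t-x_i$ and the inequality $\sum x_i^2\leq\bigl(\sum x_i\bigr)^2$; I checked the algebra and the cancellation to $\sigma_2\geq(2k+1)\,c\,t$ is right, and all the sign conditions you need ($c>0$, $t>0$, $x_i\geq 0$) do hold. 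What the paper's identity buys is brevity and conceptual transparency (every complementary pair of $(k+1)$-blocks contributes positively); what your argument buys is complete elementarity --- no double-form formalism --- plus an explicit quantitative lower bound $\sigma_2\geq(2k+1)\,c\,\lambda_{k+1}\geq\frac{2k+1}{k+1}c^2$ in terms of the hypothesis, which the paper does not record.
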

\begin{proof}
One can easily prove the previous result using the
exterior product of double forms as in
\cite{Lab-min,Lab-ident}. Using the notations of these two
references, we get
$$
\begin{array}{c}
\sigma_2=\ast\frac{1}{(n-2)!}g^{n-2}A^2=\ast\frac{1}{(n-2)!}\left(
g^kAg^kA\right)=\frac{1}{(n-2)!}\langle \ast g^kA,g^kA\rangle.
\end{array}
$$ 
The last inner product can be easily seen as a sum whose terms are
products of two factors. One factor is the sum of $(k+1)$ eigenvalues
of $A$ and the other factor is the sum of the remaning $(k+1)$
eigenvalues of $A$. This shows that $\sigma_2$ is
positive. To be more explicit, let $e_1,...,e_n$ be an orthonormal
basis that diagonalises $A$. Then
$$
\langle \ast g^kA,g^kA\rangle=\sum_{1\leq i_1 <...<i_{k+1}\leq n}
\left( \lambda_{i_1}+...+\lambda_{i_{k+1}} \right) \left( \sigma_1-
\lambda_{i_1}-...- \lambda_{i_{k+1}}\right) .
$$
Clearly, $\sigma_1>0$. This completes the proof.
\end{proof}
\subsection{Positive $\sigma_2$-curvature and 
positive second  Gauss-Bonnet curvature} We recall that the
  \emph{second Gauss-Bonnet curvature} $h_4$ is defined by
\begin{equation}\label{equation1}
h_4=\|R\|^2-\|\Ric\|^2+\frac{1}{4}\Scal^2.
\end{equation}
The positivity properties of $h_4$ were studied in
\cite{Labbisgbc,Bot-Lab}. It turns out that the $\sigma_2$-curvature
coincides (up to a constant factor) with $h_4$ for conformally-flat
manifolds. Precisely, the $\sigma_2$-curvature is the non-Weyl part of
$h_4$ as follows:
\begin{proposition}[\cite{Labbih4Y}]
For an arbitrary Riemannian manifold of dimension $n\geq
4$, we have
\begin{equation*}
h_4=|W|^2+2(n-2)(n-3)\sigma_2.
\end{equation*}
In particular, positive (resp. nonnegative) $\sigma_2$-curvature
implies $h_4> 0$ (resp. $h_4 \geq 0$).
\end{proposition}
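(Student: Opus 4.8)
The plan is to start from the orthogonal decomposition $R=W+gA$ recalled in the introduction, where $gA$ is the Kulkarni--Nomizu product and $W$ is the (totally trace-free) Weyl tensor. Because $W$ is trace-free it is orthogonal to $gA$: multiplication by $g$ is adjoint to the metric contraction $c$, so $\langle W,gA\rangle$ is a multiple of $\langle cW,A\rangle=0$. Hence $\|R\|^2=\|W\|^2+\|gA\|^2$, and the Proposition reduces to the purely algebraic identity
\[
\|gA\|^2-\|\Ric\|^2+\tfrac14\Scal^2=2(n-2)(n-3)\,\sigma_2 .
\]

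For the main computation I would reduce every term on the left to the two invariants $\|A\|^2$ and $\sigma_1(A)=\tr A$. A direct evaluation of the Kulkarni--Nomizu product expresses $\|gA\|^2$ as a fixed linear combination of $\|A\|^2$ and $\sigma_1(A)^2$. From the definition of the Schouten tensor one has, as in the Lemma above, $\Ric=(n-2)A+\sigma_1(A)\,g$ and $\Scal=2(n-1)\sigma_1(A)$; therefore $\|\Ric\|^2$ and $\Scal^2$ are likewise explicit combinations of $\|A\|^2$ and $\sigma_1(A)^2$. Substituting these, the coefficient of $\|A\|^2$ should combine into $-(n-2)(n-3)$ and that of $\sigma_1(A)^2$ into $+(n-2)(n-3)$, so that the left-hand side collapses to $(n-2)(n-3)\bigl(\sigma_1(A)^2-\|A\|^2\bigr)$.

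It then remains to invoke the identity $\sigma_1^2=2\sigma_2+\|A\|^2$ from the introduction, which gives $\sigma_1(A)^2-\|A\|^2=2\sigma_2$ and hence $h_4=\|W\|^2+2(n-2)(n-3)\sigma_2$. The positivity assertion is immediate from $\|W\|^2\geq 0$: if $\sigma_2>0$ then $h_4>0$, and if $\sigma_2\geq 0$ then $h_4\geq 0$.

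The step I expect to be most delicate is the bookkeeping of normalisation constants. The identity is sensitive to the conventions for the norms: the coefficients $-1$ and $\tfrac14$ in the definition of $h_4$ encode the double-form inner product, in which a $(p,p)$-form carries a combinatorial factor $1/(p!)^2$, so that $h_4$ is exactly the second Gauss--Bonnet curvature. Matching this factor for the $(2,2)$-tensors $R$, $W$, $gA$ against the one for the $(1,1)$-tensors $\Ric$, $A$ is precisely what produces the coefficient $2(n-2)(n-3)$, and it is easy to drop a factor. An alternative and cleaner route, presumably that of \cite{Labbih4Y}, is to work directly with double forms: writing $h_4=\ast\frac{1}{(n-4)!}g^{n-4}R^2$ and expanding $R^2=W^2+2gWA+g^2A^2$, one uses that $W$ is trace-free to eliminate the mixed term after applying $\ast\,g^{n-4}$, leaving $\|W\|^2$ together with a multiple of $\ast\frac{1}{(n-2)!}g^{n-2}A^2=\sigma_2$.
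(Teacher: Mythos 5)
Your argument is correct, but note that the paper offers no proof of this proposition to compare against: it is quoted verbatim from \cite{Labbih4Y}, and the double-form expansion you sketch at the end (write $h_4=\ast\frac{1}{(n-4)!}g^{n-4}R^2$, expand $R^2=W^2+2gWA+(gA)^2$, and kill the cross term because $cW=0$ and multiplication by $g$ is adjoint to $c$) is essentially the proof in that reference. Your primary route also checks out, and the normalisation issue you flag is exactly the one that matters: the identity holds with the double-form norm on $(2,2)$-forms, i.e.\ $\|R\|^2=\frac14 R_{ijkl}R^{ijkl}$ (one can confirm this convention against the value $h_4=\frac{n!}{4(n-4)!}$ for the unit sphere quoted in Section 5). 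With that convention one finds $\|gA\|^2=(n-2)\|A\|^2+\sigma_1(A)^2$, and together with $\Ric=(n-2)A+\sigma_1(A)g$ and $\Scal=2(n-1)\sigma_1(A)$ the left-hand side becomes
$\bigl((n-2)-(n-2)^2\bigr)\|A\|^2+\bigl(1-2(n-2)-n+(n-1)^2\bigr)\sigma_1(A)^2=(n-2)(n-3)\bigl(\sigma_1(A)^2-\|A\|^2\bigr)$,
which is $2(n-2)(n-3)\sigma_2$ by the identity $\sigma_1^2=2\sigma_2+\|A\|^2$ from the introduction, exactly as you predict. So both of your routes are sound; the only caution is that with the unnormalised tensor norm $R_{ijkl}R^{ijkl}$ the coefficients do not close up, so the convention must be fixed before the bookkeeping, not after.
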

\subsection{Higher $\sigma_k$-curvatures and higher 
Gauss-Bonnet curvatures} Even though this paper is mainly about the
$\Gamma_2$-curvature (i.e., scalar and
$\sigma_2$-curvature), we briefly discuss here the higher
$\sigma_k$-curvatures and corresponding $\Gamma_k$-curvatures.  In
this subsection we shall use the formalism of double forms as in
\cite{Lab-min,Lab-ident}.  Recall that the $\sigma_k$-curvature and
the Newton transformation $t_k(A)$ of a Riemannian manifold $(M,g)$
are given by:
\begin{equation}
\begin{array}{lclclclll}
\sigma_k &=&\sigma_k(A)&=& \displaystyle
\ast\frac{g^{n-k}A^k}{(n-k)!k!}&=&\displaystyle
\frac{c^kA^k}{(k!)^2} & \mathrm{for} & \,\, 0\leq k\leq n.
\\
\\
t_k&=&t_k(A)&=&\displaystyle
\ast\frac{g^{n-k-1}A^k}{(n-k-1)!k!}&=&\displaystyle
\sigma_k(A)g-\frac{c^{k-1}A^k}{(k-1)!k!} &\mathrm{for} & \,\, 1\leq k\leq n.
\end{array}
\end{equation}
\begin{remark}
Here we considered the Schouten tensor  $A$ as a $(1,1)$-double form. 
\end{remark}
The following algebraic fact is a classical
result, see \cite[Proposition 1.1]{CNS}:
\begin{lemma}\label{CNS1}
If for all $r$ such that $1\leq r\leq k+1\leq n$ we have
$\sigma_r(A)>0$ then the tensor $t_k(A)$ is positive definite.
\end{lemma}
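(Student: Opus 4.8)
The plan is to reduce the statement to a purely algebraic fact about the eigenvalues and then prove that fact by induction. Diagonalizing $A$ in an orthonormal frame $e_1,\dots,e_n$, the formula for $t_k$ recorded in the excerpt shows that $t_k(A)$ is diagonal in the same frame, with $i$-th eigenvalue equal to $\sigma_k(\lambda\,|\,i):=\sigma_k(\lambda_1,\dots,\widehat{\lambda_i},\dots,\lambda_n)$, the $k$-th elementary symmetric function of the eigenvalues with $\lambda_i$ deleted; equivalently $\sigma_k(\lambda\,|\,i)=\partial\sigma_{k+1}/\partial\lambda_i$. Thus positive definiteness of $t_k(A)$ is equivalent to the claim $(\star_m)$, with $m=k+1$: if $\lambda$ lies in the cone $\Gamma_m:=\{\sigma_1>0,\dots,\sigma_m>0\}$, then $\sigma_{m-1}(\lambda\,|\,i)>0$ for every $i$. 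I would prove $(\star_m)$ by induction on $m$, uniformly in the ambient dimension $n\ge m$.

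For the base cases, $(\star_1)$ is trivial since $\sigma_0\equiv 1$, and $(\star_2)$ is precisely the computation already carried out in the proof of Proposition \ref{weaker}: from $\sigma_1^2=2\sigma_2+\|A\|^2$ and $\sigma_2>0$ one gets $|\lambda_i|\le\|A\|<\sigma_1$, hence $\sigma_1(\lambda\,|\,i)=\sigma_1-\lambda_i>0$. The degenerate case $n=m$ is also immediate, since there $\Gamma_m$ is the positive orthant and $\sigma_{m-1}(\lambda\,|\,i)$ is a sum of products of positive numbers.

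For the inductive step, fix $i$ and set $\mu=\lambda\,|\,i\in\R^{n-1}$. Applying the inductive hypotheses $(\star_{r+1})$ for $r\le m-2$ to $\lambda$ (which lies in every $\Gamma_{r+1}\supseteq\Gamma_m$) gives $\sigma_r(\mu)=\sigma_r(\lambda\,|\,i)>0$ for all $r\le m-2$, i.e. $\mu\in\Gamma_{m-2}$ as a vector in $\R^{n-1}$. It remains to produce the top condition $\sigma_{m-1}(\mu)>0$. I would first show it cannot vanish: if $\sigma_{m-1}(\mu)=0$, then the splitting $\sigma_m(\lambda)=\sigma_m(\mu)+\lambda_i\,\sigma_{m-1}(\mu)$ forces $\sigma_m(\lambda)=\sigma_m(\mu)$, while the unconditional Newton--Maclaurin inequality $\sigma_{m-1}(\mu)^2\ge c\,\sigma_{m-2}(\mu)\,\sigma_m(\mu)$ (valid for any real tuple, with $c>0$), combined with $\sigma_{m-1}(\mu)=0$ and $\sigma_{m-2}(\mu)>0$, forces $\sigma_m(\mu)\le 0$, contradicting $\sigma_m(\lambda)>0$. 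Since $\sigma_{m-1}(\lambda\,|\,i)$ is therefore nowhere zero on $\Gamma_m$, which is convex (Gårding) and hence connected, and since it equals $\binom{n-1}{m-1}>0$ at $\lambda=(1,\dots,1)\in\Gamma_m$, continuity upgrades non-vanishing to positivity throughout $\Gamma_m$, completing the induction and hence the proof that $t_k(A)$ is positive definite.

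The main obstacle is the inductive step, and within it the determination of the \emph{sign} rather than mere non-vanishing: the recurrences and Newton's inequality only pin $\sigma_{m-1}(\mu)$ away from $0$ (a direct sign chase via the quadratic produced by Newton's inequality leaves a $\pm$ ambiguity when $\lambda_i>0$), so the clean route to positivity is the non-vanishing-plus-connectedness argument above. This rests on two classical inputs, the unconditional Newton--Maclaurin inequalities and the convexity of the Gårding cone $\Gamma_m$; the bookkeeping that places the deleted vector $\mu$ in $\Gamma_{m-2}(\R^{n-1})$ by invoking the lower-order hypotheses is the other point requiring care.
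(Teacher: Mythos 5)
Your proof is correct, but it is worth noting that the paper does not actually prove Lemma \ref{CNS1} at all: it is stated as a classical fact with a citation to \cite[Proposition 1.1]{CNS}, so your argument is a self-contained reconstruction rather than a variant of an argument in the text. The reconstruction is sound: the identification of the eigenvalues of $t_k(A)$ with $\sigma_k(\lambda\,|\,i)=\partial\sigma_{k+1}/\partial\lambda_i$ is right, the base case $(\star_2)$ does follow from the computation in Proposition \ref{weaker} (or from your shorter estimate $|\lambda_i|\le\|A\|<\sigma_1$), the splitting $\sigma_m(\lambda)=\sigma_m(\mu)+\lambda_i\sigma_{m-1}(\mu)$ is correct, and you were right to isolate the two genuinely delicate inputs: the \emph{unconditional} Newton inequality $\sigma_{m-1}(\mu)^2\ge c\,\sigma_{m-2}(\mu)\,\sigma_m(\mu)$ valid for arbitrary real tuples (this is what rules out $\sigma_{m-1}(\mu)=0$, using $\sigma_{m-2}(\mu)>0$ supplied by the lower-order inductive hypotheses), and the connectedness of $\Gamma_m$ needed to convert non-vanishing into positivity. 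Two small remarks. First, invoking G\aa rding convexity for the connectedness step quietly uses the identification of $\{\sigma_1>0,\dots,\sigma_m>0\}$ with the G\aa rding cone of $\sigma_m$, which is itself part of the same circle of classical facts; you can avoid any appearance of circularity by an elementary path argument: $\sigma_r(\lambda+t(1,\dots,1))$ has all coefficients positive in $t$ for $\lambda\in\Gamma_m$ and $r\le m$, so the ray $t\mapsto\lambda+t(1,\dots,1)$ stays in $\Gamma_m$, and $\sigma_{m-1}\bigl((\lambda+t(1,\dots,1))\,|\,i\bigr)\to+\infty$ as $t\to\infty$, which together with non-vanishing forces positivity at $t=0$. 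Second, your separate treatment of the degenerate case $n=m$ is necessary (since $\sigma_m(\mu)$ is undefined for $\mu\in\R^{n-1}$ when $m=n$) and is handled correctly via the positive orthant. Net effect: your proof is a legitimate, essentially classical argument that makes the paper's black-box citation self-contained.
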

We are now going to apply Lemma \ref{CNS1} to the higher Gauss-Bonnet
curvatures and Einstein-Lovelock tensors.  First,
recall that for $2\leq 2k\leq n$, the $2k$-th Gauss-Bonnet curvature
$h_{2k}$ of $(M,g)$ is the function defined on $M$ by
\begin{equation}\label{D}
h_{2k}={1\over (n-2k)!}*\bigl( g^{n-2k}R^k\bigr).
\end{equation}
 The $2k$-th Einstein-Lovelock tensor of $(M,g)$, denoted $T_{2k}$, is
 defined by
\begin{equation}\label{D+}
T_{2k}=*{1\over (n-2k-1)!}g^{n-2k-1}R^k.
\end{equation}
If $2k=n$, we set $T_n=0$. For $k=0$, we set $h_0=1$
and $T_0=g$.

Suppose now that $(M,g)$ is a conformally flat
manifold. Then its Riemann curvature tensor is
determined by the Schouten tensor $A$ as follows:
$$
R=gA.
$$
Consequently, for $n-2k-1\geq 0$, we have:
\begin{equation}
t_k(A)\!=
\!\ast\frac{g^{n-k-1}A^k}{(n-k-1)!k!}=
\ast\frac{g^{n-2k-1}(gA)^k}{(n-k-1)!k!}=
\ast\frac{g^{n-2k-1}R^k}{(n-k-1)!k!}=\frac{(n-2k-1)!}{k!(n-k-1)!}T_{2k}.
\end{equation} 
Similarly, we have for $n\geq 2k$:
\begin{equation}
\sigma_k(A) =\ast\frac{g^{n-k}A^k}{(n-k)!k!}
=\ast\frac{g^{n-2k}(gA)^k}{(n-k)!k!}=\ast\frac{g^{n-2k}(R)^k}{(n-k)!k!}
=\frac{(n-2k)!}{(n-k)!k!}h_{2k}.
\end{equation}
We have therefore proved the following result:
\begin{proposition}\label{conf-flat}
For   a conformally flat $n$-manifold $(M,g)$,  the following are true
\begin{enumerate}
\item For $2\leq 2k\leq n$, the $\sigma_k$-curvature is positive if and only if  the $h_{2k}$-Gauss-Bonnet curvature is positive.
\item If the Gauss-Bonnet curvatures $h_2,h_4,...,h_{2k+2}$ of $(M,g)$
  are all positive then the Einstein-Lovelock tensors $T_2,...,T_{2k}$
  of $(M,g)$ are all positive definite.
\end{enumerate}
\end{proposition}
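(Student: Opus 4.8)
The plan is to read off both statements directly from the two proportionality identities established in the displays immediately preceding the proposition, namely
\[
\sigma_k(A)=\frac{(n-2k)!}{(n-k)!\,k!}\,h_{2k}, \qquad t_k(A)=\frac{(n-2k-1)!}{k!\,(n-k-1)!}\,T_{2k},
\]
which hold for a conformally flat metric because $R=gA$. All the genuine computation has already been carried out there; what remains is to check that the scalar coefficients are strictly positive in the relevant index ranges and then to feed the resulting positivity of the $\sigma_r(A)$ into Lemma \ref{CNS1}.

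For part (1), I would observe that under the hypothesis $2\leq 2k\leq n$ all three factorials $(n-2k)!$, $(n-k)!$ and $k!$ are factorials of nonnegative integers, so the coefficient $\frac{(n-2k)!}{(n-k)!\,k!}$ is a well-defined strictly positive constant. Since $\sigma_k(A)$ and $h_{2k}$ therefore differ pointwise by a positive factor, one is positive exactly when the other is, which is the asserted equivalence.

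For part (2), I would first apply part (1) in each even degree $2,4,\dots,2(k+1)$: the hypothesis that $h_2,h_4,\dots,h_{2k+2}$ are all positive then translates into $\sigma_1(A),\dots,\sigma_{k+1}(A)$ being all positive. Now fix any $j$ with $1\leq j\leq k$. For every $r$ with $1\leq r\leq j+1$ we have $r\leq k+1$, hence $\sigma_r(A)>0$, so the hypothesis of Lemma \ref{CNS1} is met and $t_j(A)$ is positive definite. Finally, because the very existence of $h_{2k+2}$ forces $2k+2\leq n$, we get $n-2j-1\geq n-2k-1\geq 1>0$, so the second identity above expresses $t_j(A)$ as a strictly positive multiple of $T_{2j}$, whence $T_{2j}$ is positive definite; letting $j$ run over $1,\dots,k$ gives the claim.

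The only point requiring any care---and the closest thing to an obstacle---is the index bookkeeping: one must verify that the range $1\leq r\leq j+1\leq k+1$ needed to invoke Lemma \ref{CNS1} is exactly the range of $\sigma_r$'s rendered positive by part (1), and that the dimension constraint $2k+2\leq n$ keeps every factorial coefficient positive. No analytic or geometric input beyond the conformal-flatness identity $R=gA$ is needed, so I expect the write-up to be short.
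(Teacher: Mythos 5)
Your proposal is correct and follows exactly the paper's own route: the paper derives the two proportionality identities $\sigma_k(A)=\frac{(n-2k)!}{(n-k)!k!}h_{2k}$ and $t_k(A)=\frac{(n-2k-1)!}{k!(n-k-1)!}T_{2k}$ from $R=gA$ and then declares the proposition proved, with Lemma \ref{CNS1} supplying the passage from positive $\sigma_r$'s to positive definite Newton transformations, just as you do. Your extra care with the index ranges (in particular that $h_{2k+2}$ existing forces $n\geq 2k+2$, keeping all coefficients positive) is a correct and welcome elaboration of what the paper leaves implicit.
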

\section{Riemannian submersions and $\sigma_2$-curvature}\label{sec:submersion}
\subsection{General observations}
Let $(M,g)$ be a total space of a Riemannian submersion $p: M\to
B$. We denote by $\hat g$ the restriction of $g$ to
the fiber $F=p^{-1}(x)$, $x\in B$. We denote by $\Scal(\hat g)$ and
$\sigma_2(\hat g)$ the scalar curvature and $\sigma_2$-curvature of
the metric $\hat g$, respectively.

For a Riemannian submersion $p: M\to B$ as above, there is a
\emph{canonical variation} $g_t$ of the original metric $g$, which is
a fiberwise scaling by $t^2$. Then, if the fiber metrics $\hat g$
satisfies some bounds, this construction delivers a metric on a total
space with positive (negative) curvature.
\begin{theorem}[See \cite{Labbih4Y}]\label{Thm1}
Let $p: M\to B$ be a Riemannian submersion where the total space
$(M,g)$ is a compact manifold with $\dim M =n$ and fibers
$F$, where $\dim F=p$. Let $\hat g$ be the induced
  metric on fiber $F$. Assume that the inequality
\begin{equation}\label{conditionA}
\begin{array}{llcl}
&8(n-1)(p-1)(p-2)^2\sigma_2(\hat g) &>& (n-p)\Scal^2(\hat g)
  \ \ \\ \\ \mbox{(resp.} & 8(n-1)(p-1)(p-2)^2\sigma_2(\hat
  g) &<& (n-p)\Scal^2(\hat g))
\end{array}
\end{equation}
holds for every fiber $(F,\hat g)$.  Then there exists $t_0>0$ such
that the canonical variation metric $g_t$ on $M$ has $\sigma_2(g_t)>0$
(resp. $\sigma_2(g_t)<0$) for all $0<t\leq t_0$.  

Furthermore, if $\Scal (\hat g)>0$ (resp. $\Scal (\hat
g)<0$), then $\Scal (g_t)>0$ (resp. $\Scal (g_t)<0$)
for all $0<t\leq t_0$.
\end{theorem}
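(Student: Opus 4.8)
The plan is to determine how the eigenvalues of the Schouten operator of $g_t$ behave as $t\to 0$ and to read off the sign of $\sigma_2(g_t)$ from the leading term. First I would record the standard O'Neill formulas for a Riemannian submersion and its canonical variation (as in Besse's \emph{Einstein Manifolds}): in a $g_t$-orthonormal frame adapted to the splitting $TM=\mathcal H\oplus\mathcal V$ into horizontal and vertical subspaces, the Ricci operator of $g_t$ has vertical-vertical block $\frac{1}{t^2}\hat\rho+O(1)$, where $\hat\rho$ is the intrinsic Ricci operator of the fiber $(F,\hat g)$; its horizontal-horizontal block is $O(1)$ (the base Ricci together with integrability corrections of order $t^2$); and its mixed block is of lower order than $t^{-2}$. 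In particular $\Scal(g_t)=\frac{1}{t^2}\Scal(\hat g)+O(1)$. This already settles the last assertion: since $M$ is compact the $O(1)$ error is uniformly bounded, so $\Scal(\hat g)>0$ (resp. $<0$) forces $\Scal(g_t)>0$ (resp. $<0$) for all small enough $t$.

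For the $\sigma_2$ statement I would use the identity $\sigma_1^2=2\sigma_2+\|A\|^2$ recorded above, so that $2\sigma_2(g_t)=\sigma_1(g_t)^2-\|A(g_t)\|^2$ with $\sigma_1(g_t)=\frac{\Scal(g_t)}{2(n-1)}$. From the Ricci asymptotics the Schouten operator of $g_t$ equals $\frac{1}{t^2}\tilde S+O(1)$, where the leading operator $\tilde S$ is block diagonal: on $\mathcal V$ it is $\frac{1}{n-2}\bigl(\hat\rho-\frac{\Scal(\hat g)}{2(n-1)}\,\mathrm{Id}\bigr)$, and on $\mathcal H$ it is $-\frac{\Scal(\hat g)}{2(n-1)(n-2)}\,\mathrm{Id}$. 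A short expansion then gives $\sigma_2(g_t)=\frac{1}{t^4}\,\sigma_2(\tilde S)+O(t^{-2})$, where $\sigma_2(\tilde S)=\frac12\bigl((\tr\tilde S)^2-\tr(\tilde S^2)\bigr)$. Thus everything reduces to evaluating this single scalar in terms of intrinsic fiber data.

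The main computation --- and the step carrying the real content --- is to express $\sigma_2(\tilde S)$ through $\sigma_2(\hat g)$ and $\Scal(\hat g)$ alone. Writing $s=\Scal(\hat g)$ and $\mu_1,\dots,\mu_p$ for the fiber Ricci eigenvalues, one finds $\tr\tilde S=\frac{s}{2(n-1)}$, while $\tr(\tilde S^2)$ is a combination of $\|\hat\rho\|^2$ and $s^2$. The key input is the fiber version of the identity above, $\sigma_1(\hat A)^2=2\sigma_2(\hat g)+\|\hat A\|^2$ applied in dimension $p$, which lets me eliminate $\|\hat\rho\|^2$ via $\|\hat\rho\|^2=\frac{p\,s^2}{4(p-1)}-2(p-2)^2\sigma_2(\hat g)$. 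Substituting and collecting terms, the $s^2$-contributions simplify dramatically, and I expect to obtain
$$
\sigma_2(\tilde S)=\frac{1}{8(n-1)(n-2)^2(p-1)}\Bigl(8(n-1)(p-1)(p-2)^2\sigma_2(\hat g)-(n-p)\Scal^2(\hat g)\Bigr).
$$
Since the prefactor is positive, the sign of the leading term of $\sigma_2(g_t)$ is exactly the sign of the difference appearing in hypothesis \eqref{conditionA}.

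It remains to package this into a uniform statement. Because $M$ is compact and \eqref{conditionA} is a strict inequality at every fiber, the quantity $8(n-1)(p-1)(p-2)^2\sigma_2(\hat g)-(n-p)\Scal^2(\hat g)$ is bounded away from $0$ with a fixed sign, while all the $O(1)$ error terms in the expansions of both $\sigma_2(g_t)$ and $\Scal(g_t)$ are uniformly bounded on $M$; hence a single threshold $t_0>0$ works at every point and yields $\sigma_2(g_t)>0$ (resp. $<0$) for $0<t\le t_0$. The most delicate point to get right is not the sign analysis but the O'Neill bookkeeping, namely verifying that no $t^{-2}$ term leaks into the horizontal or mixed blocks of the Ricci operator, since this is exactly what guarantees that the leading Schouten operator $\tilde S$ is the clean block-diagonal one used above (here I assume $p\ge 3$, so that the fiber Schouten tensor and $\sigma_2(\hat g)$ are defined).
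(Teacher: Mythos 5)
Your proposal is correct and follows essentially the same route as the paper: both rest on the O'Neill asymptotics $\|\Ric(g_t)\|_{g_t}^2=t^{-4}\|\Ric(\hat g)\|^2+O(t^{-2})$, $\Scal^2(g_t)=t^{-4}\Scal^2(\hat g)+O(t^{-2})$, and on the quadratic identity $2(n-2)^2\sigma_2=-\|\Ric\|^2+\tfrac{n}{4(n-1)}\Scal^2$ (equivalently $\sigma_1^2=2\sigma_2+\|A\|^2$) applied both on $M$ and on the fiber to eliminate $\|\Ric(\hat g)\|^2$, and your leading coefficient agrees with the paper's. The only difference is presentational: you organize the computation through the block-diagonal leading Schouten operator, whereas the paper works directly with the scalar norms and cites \cite{Labbisgbc} for the O'Neill bookkeeping you flag as the delicate step.
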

\begin{proof} We modify  the proof of \cite[Theorem B]{Labbisgbc} 
a bit.  Using O'Neill's formulas for Riemannian
submersions, we get an estimate of $ \Scal^2(g_t)$
and the norm of the Ricci curvature $||\Ric(g_t)||_{g_t}^2$ of the
canonical variation metric $g_t$ as follows:
$$
\begin{array}{c}
||\Ric(g_t)||_{g_t}^2={1\over t^4}||\Ric(\hat g)||^2+O({1\over
  t^2}),\,\, \mbox{and}\,\, \Scal^2(g_t)={1\over t^4} \Scal^2(\hat
g)+O({1\over t^2});
\end{array}
$$ 
see \cite{Labbisgbc} for details.  Consequently, the
$\sigma_2$-curvature of the metric $g_t$ is given by
\begin{equation*}
\begin{array}{lcl}
2(n-2)^2\sigma_2(g_t)&=& -||\Ric(g_t)||_{g_t}^2+\frac{n}{4(n-1)} \Scal^2(g_t)
\\
\\
&=& \frac{1}{t^4}\big\{(2(p-2)^2\sigma_2(\hat g) -\frac{n-p}{4(n-1)(p-1)}\Scal^2(\hat g)\bigr\}+O({1\over t^2}).
\end{array}
\end{equation*}
To prove the second part of the theorem, it is enough to recall that
$$ 
\Scal (g_t)={1\over t^2} \Scal (\hat g)+O(1).
$$
This concludes the proof.
\end{proof}
If the fiber $(F,\hat g)$ is Einstein with $\dim F=p$, then its
$\sigma_2$-curvature is determined by the scalar
curvature as follows (see \cite[Proposition
  3.2]{Labbih4Y}):
\begin{equation}\label{sigma}
\sigma_2(\hat g) = \frac{1}{8p(p-1)}\Scal^2(\hat g).
\end{equation}
 Consequently, the $\sigma_2$-curvature of the metric $g_t$ of the
 total space is given by
\begin{equation}\label{eq3}
\begin{array}{c}
2(n-2)^2\sigma_2(g_t)= \frac{\Scal^2(\hat
  g)}{t^4}\left(\frac{n(p-4)+4}{p(n-1)}\right)+O\left({1\over
  t^2}\right).
\end{array}
\end{equation}
Therefore, the first inequality in (\ref{conditionA}) holds for such a
metric $\hat g$, provided that the dimension $\dim F=p\geq 4$ and
$\hat g$ is not Ricci flat. Thus, we have proved the
following result:
\begin{corollary}\label{cor:1-2}
Let $p: M\to B$ be a Riemannian submersion, where the total space
$(M,g)$ is a compact manifold with $\dim M =n$ and fibers
$F$, where $\dim F=p\geq 4$. Let $\hat g$ be the
induced metric on fiber $F$. Assume that the fiber metrics $\hat g$
are non Ricci-flat Einstein metrics. Then there exists $t_0>0$ such
that the canonical variation metric $g_t$ on $M$ has $\sigma_2(g_t)>0$
for all $0<t\leq t_0$.

Furthermore, the scalar curvature $\Scal(g_t)$ of the metric $g_t$ has
the same sign as the scalar curvature of the fiber metrics $\hat g$
for all $0<t\leq t_0$.
\end{corollary}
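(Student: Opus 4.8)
The plan is to obtain the Corollary as a direct specialization of Theorem \ref{Thm1}, using the Einstein identity (\ref{sigma}) to reduce the hypothesis (\ref{conditionA}) of that theorem to a purely numerical inequality in $n$ and $p$. Since each fiber $(F,\hat g)$ is Einstein of dimension $p$, formula (\ref{sigma}) lets me replace the fiber $\sigma_2$-curvature by its scalar curvature everywhere, $\sigma_2(\hat g)=\frac{1}{8p(p-1)}\Scal^2(\hat g)$. The entire argument then comes down to checking that the first inequality in (\ref{conditionA}) --- which is exactly the positivity of the leading coefficient of the canonical variation --- holds automatically on every fiber under the stated hypotheses.

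First I would substitute (\ref{sigma}) into the difference between the two sides of the first inequality in (\ref{conditionA}). A short computation shows that, after cancellation, this difference equals $\frac{(p-1)\bigl(n(p-4)+4\bigr)}{p}\Scal^2(\hat g)$, which is --- up to normalization --- the leading term of the asymptotic expansion (\ref{eq3}) of $\sigma_2(g_t)$ for an Einstein fiber.

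It remains to verify that this quantity is strictly positive. For $p\geq 4$ the factor $n(p-4)+4$ is at least $4$: it equals $4$ when $p=4$, and $n(p-4)+4\geq n+4$ when $p\geq 5$. The prefactor $(p-1)/p$ is positive as well. Finally, because $\hat g$ is Einstein and non Ricci-flat, $\Scal(\hat g)$ is a nonzero constant, so $\Scal^2(\hat g)>0$. Hence the first inequality of (\ref{conditionA}) holds on every fiber, and Theorem \ref{Thm1} supplies $t_0>0$ with $\sigma_2(g_t)>0$ for all $0<t\leq t_0$. The statement on scalar curvature is then immediate from the ``Furthermore'' part of Theorem \ref{Thm1}: since $\Scal(\hat g)$ has a fixed sign, so does $\Scal(g_t)$ for all sufficiently small $t$.

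The calculation is routine; the only genuinely delicate point --- and the place where the hypotheses are indispensable --- is the non-vanishing of $\Scal(\hat g)$. If the fiber were Ricci flat, then $\Scal(\hat g)=0$ and the leading coefficient $\bigl(n(p-4)+4\bigr)\Scal^2(\hat g)$ would vanish identically, so the sign of $\sigma_2(g_t)$ for small $t$ would be governed by the uncontrolled remainder of order $t^{-2}$ in (\ref{eq3}). The ``non Ricci-flat'' assumption is precisely what forces the leading term to dominate and fixes the sign; this is the one step I would treat with care.
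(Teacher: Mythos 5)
Your proposal is correct and follows essentially the same route as the paper: substituting the Einstein identity (\ref{sigma}) into the hypothesis (\ref{conditionA}) of Theorem \ref{Thm1}, reducing it to the positivity of $\frac{(p-1)\bigl(n(p-4)+4\bigr)}{p}\Scal^2(\hat g)$, which holds for $p\geq 4$ and non Ricci-flat $\hat g$ --- this is exactly the leading coefficient the paper records in (\ref{eq3}). Your explicit verification of the algebra and your remark on why the non Ricci-flat hypothesis is indispensable are both accurate.
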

Let $\H\P^2$ be a quaternionic projective plane equipped with a
standard metric $\hat g_0$. It is well-known that the metric $\hat
g_0$ is Einstein with positive sectional curvature and that it has
isometry group $PSp(3)$.

Then a smooth fiber bundle $\pi: M\to B$ is called a \emph{geometric
  $\H\P^2$-bundle} if its fiber is $\H\P^2$ and the structure group is
$PSp(3)$. Given any Riemannian metric on the base $B$, there is a
canonical metric $g_0$ on the total space $M$ inducing the metric
$\hat g_0$ along every fiber, and the projection $\pi: M\to B$ becomes
a Riemannian submersion.  Thus, we have the following:
\begin{corollary}\label{HP2}
Let $\pi: M\to B$ be a geometric $\H\P^2$-bundle. Then $M$ admits a
metric with positive $\Gamma_2$-curvature.
\end{corollary}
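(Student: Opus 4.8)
The plan is to realize $(M,g_0)$ as a Riemannian submersion whose fibers are all isometric to the standard quaternionic projective plane, and then to invoke Corollary \ref{cor:1-2} directly.

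First I would record the relevant geometry of the fiber. The quaternionic projective plane $\H\P^2$ has real dimension $p=8\geq 4$, and the standard metric $\hat g_0$ is a rank-one symmetric metric of positive sectional curvature; in particular it is Einstein with positive scalar curvature, and hence is not Ricci-flat. Thus $(\H\P^2,\hat g_0)$ satisfies precisely the fiber hypotheses appearing in Corollary \ref{cor:1-2}: dimension at least four, Einstein, and non Ricci-flat.

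Next I would produce the submersion metric. Since the structure group of $\pi:M\to B$ is $PSp(3)=\mathrm{Isom}(\H\P^2,\hat g_0)$, the transition functions of the bundle act by isometries of $\hat g_0$. Choosing any Riemannian metric on $B$ together with a principal connection then yields, by the standard construction, a canonical metric $g_0$ on the total space $M$ whose restriction to each fiber equals $\hat g_0$ and for which $\pi$ becomes a Riemannian submersion. This is the key point that makes the fiberwise hypotheses of the corollary hold uniformly over $B$: every fiber $(F,\hat g)$ is isometric to the fixed non Ricci-flat Einstein manifold $(\H\P^2,\hat g_0)$, so in particular $\Scal(\hat g)=\Scal(\hat g_0)>0$ on every fiber.

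Finally I would apply Corollary \ref{cor:1-2} to the canonical variation $g_t$ of $g_0$. The corollary supplies a $t_0>0$ such that $\sigma_2(g_t)>0$ for all $0<t\leq t_0$, and it also asserts that $\Scal(g_t)$ has the same sign as the fiber scalar curvature $\Scal(\hat g_0)>0$, so that $\Scal(g_t)>0$. Since $\sigma_1(A)=\Scal/(2(n-1))$ is a positive multiple of the scalar curvature, the condition $\Scal(g_t)>0$ is equivalent to $\sigma_1(g_t)>0$. Hence $g_t$ satisfies both $\sigma_1>0$ and $\sigma_2>0$, i.e. it has positive $\Gamma_2$-curvature, for every $0<t\leq t_0$. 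The only substantive step is the second one, namely arranging a submersion metric in which all fibers are genuinely isometric to $(\H\P^2,\hat g_0)$; this is exactly what the structure group $PSp(3)$ guarantees, and the remainder is a direct quotation of Corollary \ref{cor:1-2}.
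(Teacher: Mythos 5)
Your proposal is correct and follows essentially the same route as the paper: the text preceding Corollary \ref{HP2} constructs exactly this canonical submersion metric $g_0$ (using that the structure group $PSp(3)$ acts by isometries of $\hat g_0$, so every fiber is isometric to the non Ricci-flat Einstein manifold $(\H\P^2,\hat g_0)$ of dimension $8\geq 4$) and then deduces the result from Corollary \ref{cor:1-2}. Your additional remark that $\sigma_1$ is a positive multiple of $\Scal$, so the scalar-curvature conclusion of Corollary \ref{cor:1-2} gives $\sigma_1(g_t)>0$, is a correct and welcome spelling-out of a step the paper leaves implicit.
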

\subsection{Negative $\sigma_2$-curvature for low-dimensional fibers}
If the fibers have lower dimensions, the above machinery helps to
construct metrics with negative $\sigma_2$-curvature as follows.
\begin{corollary}
Let $p: M\to B$ be a Riemannian submersion where the total space
$(M,g)$ is a compact manifold with $\dim M =n$ and fibers
$F$, where $\dim F=2$. Let $\hat g$ be the
induced metric on fiber $F$.  Assume that the Gaussian curvature of
the fiber metrics $\hat g$ does not vanish and $n\geq 3$.  Then there
exists $t_0>0$ such that the canonical variation metric $g_t$ on $M$
has $\sigma_2(g_t)<0$ for all $0<t\leq t_0$.

Furthermore, the scalar curvature $\Scal(g_t)$ of the metric $g_t$ has
the same sign as the Gaussian curvature of the fiber metrics $\hat g$
for all $0<t\leq t_0$.
\end{corollary}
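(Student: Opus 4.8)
The plan is to specialize Theorem \ref{Thm1} to the case of two-dimensional fibers, $p=\dim F=2$, and to verify the second (``resp.'') inequality in (\ref{conditionA}), which is precisely the one producing negative $\sigma_2$-curvature. The only geometric input needed is that for a surface the scalar curvature and the Gaussian curvature $K$ are related by $\Scal(\hat g)=2K$, so the hypothesis that $K$ never vanishes is exactly the hypothesis $\Scal(\hat g)\neq 0$.

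The decisive observation is algebraic: when $p=2$ the factor $(p-2)^2$ appearing in (\ref{conditionA}) vanishes, so the left-hand side $8(n-1)(p-1)(p-2)^2\sigma_2(\hat g)$ is identically zero and the ``resp.'' inequality reduces to $0<(n-2)\,\Scal^2(\hat g)$. This is fortunate, since the $\sigma_2$-curvature of a two-dimensional fiber is not naturally defined (the Schouten tensor degenerates in dimension two); I would therefore work directly from the leading-order expansion established in the proof of Theorem \ref{Thm1}, where with $p=2$ the coefficient of the ill-defined term $\sigma_2(\hat g)$ is $(p-2)^2=0$ and the term simply drops out, leaving
$$
2(n-2)^2\sigma_2(g_t)= -\frac{n-2}{4(n-1)\,t^4}\,\Scal^2(\hat g)+O\!\left({1\over t^2}\right).
$$
For $n\geq 3$ we have $n-2>0$, and since $K\neq 0$ everywhere we have $\Scal^2(\hat g)=4K^2>0$, so the leading $1/t^4$ term is strictly negative at every point.

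To turn these pointwise signs into a statement holding on all of $M$ with a single $t_0$, I would invoke compactness: the continuous function $\Scal^2(\hat g)$ is strictly positive on the compact manifold $M$, hence bounded below by a positive constant, so the negative $1/t^4$ term dominates the $O(1/t^2)$ remainder uniformly for all sufficiently small $t$; this yields $t_0>0$ with $\sigma_2(g_t)<0$ for $0<t\leq t_0$. For the scalar-curvature claim I would use the final assertion of Theorem \ref{Thm1}: since $K$ is continuous and nowhere zero on the connected manifold $M$ it has constant sign, so $\Scal(\hat g)=2K$ is everywhere positive or everywhere negative, and the theorem then gives $\Scal(g_t)$ of the same sign for all small $t$.

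There is essentially no hard step here; the result is a direct specialization of Theorem \ref{Thm1}. The one point requiring care—which I would flag explicitly rather than conceal—is the degeneracy of $\sigma_2(\hat g)$ in dimension two: condition (\ref{conditionA}) as literally written references a quantity that does not make sense for the fibers, so the correct reading is through the expansion in the proof of Theorem \ref{Thm1}, where the offending term is multiplied by $(p-2)^2=0$. Once this is acknowledged, the two remaining verifications, namely $n-2>0$ and $\Scal^2(\hat g)>0$, are immediate.
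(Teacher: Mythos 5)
Your proposal is correct and follows exactly the route the paper intends: the corollary is stated as a direct specialization of Theorem \ref{Thm1} to $p=2$, where the $(p-2)^2$ factor kills the fiber $\sigma_2$ term and leaves the strictly negative leading coefficient $-\frac{n-2}{4(n-1)}\Scal^2(\hat g)$, with compactness giving a uniform $t_0$. Your explicit flag that $\sigma_2(\hat g)$ is not defined for two-dimensional fibers, and that one must read the statement through the expansion in which that term carries the vanishing coefficient, is a legitimate point of care that the paper leaves implicit.
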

In the case of three-dimensional fibres we
have:
\begin{corollary}
Let $p: M\to B$ be a Riemannian submersion where the total space
$(M,g)$ is a compact manifold with $\dim M =n\geq 5$ and fibers
$F$, where $\dim F=3$. Let $\hat g$ be the induced
metric on fiber $F$. Assume that the fiber metrics $\hat g$ are non
Ricci-flat Einstein metrics. Then there exists $t_0>0$ such that the
canonical variation metric $g_t$ on $M$ has $\sigma_2(g_t)<0$ for all
$0<t\leq t_0$.

Furthermore, the scalar curvature $\Scal(g_t)$ of the metric
$g_t$ has the same sign as the scalar curvature of the fiber metrics
$\hat g$ for all $0<t\leq t_0$.
\end{corollary}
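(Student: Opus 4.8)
The plan is to apply formula~(\ref{eq3}) directly, since the hypothesis guarantees that each fiber $(F,\hat g)$ is Einstein with $\dim F = p = 3$, which is exactly the setting in which that formula was derived. First I would substitute $p = 3$ into the leading coefficient $\frac{n(p-4)+4}{p(n-1)}$ appearing in~(\ref{eq3}), obtaining
$$
\frac{n(3-4)+4}{3(n-1)} = \frac{4-n}{3(n-1)}.
$$
For $n \geq 5$ the denominator is positive while the numerator $4-n$ is strictly negative, so this coefficient is strictly negative. I would remark that at $n = 4$ the coefficient vanishes identically, which is precisely why the statement requires $n \geq 5$: in the borderline case the sign of $\sigma_2(g_t)$ would be governed by the uncontrolled $O(1/t^2)$ remainder.

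Next I would use the non-Ricci-flat Einstein assumption. Since $(F,\hat g)$ is Einstein of dimension $3$ but not Ricci flat, its scalar curvature is a nonzero constant on each fiber, so $\Scal^2(\hat g) > 0$; by compactness of $M$ this bound is uniform over all fibers. Combining this with the sign computation above, the leading term $\frac{\Scal^2(\hat g)}{t^4}\cdot\frac{4-n}{3(n-1)}$ in~(\ref{eq3}) is strictly negative and dominates the $O(1/t^2)$ remainder as $t \to 0$. Hence there is a uniform $t_0 > 0$ such that $2(n-2)^2\sigma_2(g_t) < 0$, i.e.\ $\sigma_2(g_t) < 0$, for all $0 < t \leq t_0$.

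For the scalar curvature claim I would invoke the asymptotic expansion $\Scal(g_t) = \frac{1}{t^2}\Scal(\hat g) + O(1)$ recorded in the proof of Theorem~\ref{Thm1}. As $t \to 0$ the leading term dominates, so the sign of $\Scal(g_t)$ agrees with the (locally constant, nonzero) sign of $\Scal(\hat g)$ for all sufficiently small $t$, again uniformly by compactness.

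Since this is a direct specialization of the machinery already set up for Corollaries~\ref{cor:1-2} and~\ref{HP2}, I do not expect any serious obstacle. The only genuinely essential point is the verification that the leading coefficient is strictly negative, and this is exactly what pins down the dimension restriction $n \geq 5$, the case $n = 4$ yielding a vanishing coefficient and hence an indeterminate sign.
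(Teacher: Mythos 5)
Your proposal is correct and follows exactly the route the paper intends: the corollary is a direct specialization of formula~(\ref{eq3}) with $p=3$, where the coefficient $\frac{n(p-4)+4}{p(n-1)}=\frac{4-n}{3(n-1)}$ is strictly negative for $n\geq 5$, combined with the non-Ricci-flat Einstein hypothesis and the expansion of $\Scal(g_t)$ from Theorem~\ref{Thm1}. Your observation that the coefficient vanishes at $n=4$, which explains the dimension restriction, is a correct and worthwhile remark.
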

Next, we specify the previous results to products with the standard
spheres.  Let $\hat g(r)$ be a standard round metric on the sphere
$S^p$ of radius $r$ .
\begin{corollary}\label{example}
Let $(M,g(r))=(S^p,\hat
  g(r)) \times (B,g_B)$, where $(B,g_B)$ is an arbitrary compact
  Riemannian manifold.
\begin{itemize}
\item If $p=2$  and $\dim B\geq 1$, then  $\Scal(g(r))>0$  and
$\sigma_2(g(r))<0$ for all  $r$ sufficiently small.

\vspace{2mm}

\item If  $p=3$ and $\dim B\geq 2$, then  $\Scal(g(r))>0$  and
$\sigma_2(g(r))<0$ for all  $r$ sufficiently small.

\vspace{2mm}

\item If $p\geq 4$, then  $\Scal(g(r))>0$ and
  $\sigma_2(g(r))>0$ for all  $r$ sufficiently small.

\end{itemize}
\end{corollary}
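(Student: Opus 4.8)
The plan is to recognize $(M,g(r))=(S^p,\hat g(r))\times(B,g_B)$ as the total space of the product Riemannian submersion onto $B$, whose fiber is the round sphere $(S^p,\hat g(r))$, and to observe that varying the radius $r$ is precisely the canonical variation of this submersion. Indeed, scaling the fiber metric by $t^2$ while keeping $g_B$ fixed sends a round fiber of radius $1$ to one of radius $t$, so the family $g(r)$ is exactly the canonical variation family $g_t$ with $t=r$, and the regime $0<t\le t_0$ of Theorem \ref{Thm1} and its corollaries is exactly the regime of small radius $r$.

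First I would record the curvature data of the fiber. The round metric $\hat g(r)$ on $S^p$ is Einstein with $\Ric(\hat g(r))=\tfrac{p-1}{r^2}\hat g(r)$ and $\Scal(\hat g(r))=\tfrac{p(p-1)}{r^2}>0$ for every $p\ge2$; in particular it is never Ricci-flat, and for $p=2$ its Gaussian curvature $1/r^2$ is strictly positive.

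Next I would dispatch the three cases by quoting the appropriate corollary and checking its dimension hypothesis against $\dim M=p+\dim B$. For $p\ge4$ the fiber is a non Ricci-flat Einstein metric, so Corollary \ref{cor:1-2} gives $\sigma_2(g(r))>0$, with $\Scal(g(r))$ of the same sign as $\Scal(\hat g(r))>0$, for all small $r$; when $\dim B=0$ the statement is immediate, since the round sphere has Schouten tensor $A=\tfrac{1}{2r^2}\hat g(r)$, which is positive definite, so all $\sigma_k>0$. For $p=3$ the fiber is again non Ricci-flat Einstein, and the preceding corollary for three-dimensional fibers (which requires $\dim M\ge5$, i.e. $\dim B\ge2$) yields $\sigma_2(g(r))<0$ together with $\Scal(g(r))>0$ for small $r$. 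For $p=2$ the fiber has nonvanishing positive Gaussian curvature, and the preceding corollary for two-dimensional fibers (which requires $\dim M\ge3$, i.e. $\dim B\ge1$) gives $\sigma_2(g(r))<0$ together with $\Scal(g(r))>0$ for small $r$.

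There is no genuine analytic difficulty here, as the statement is a direct specialization of results already proved; the only points requiring attention are bookkeeping ones. One must verify that fiberwise rescaling of a product metric is literally the canonical variation used above, so that the O'Neill estimates of Theorem \ref{Thm1} apply verbatim, and one must match the dimension thresholds $\dim M\ge3$ and $\dim M\ge5$ of the low-fiber corollaries to the stated bounds $\dim B\ge1$ and $\dim B\ge2$. The sign of $\Scal(g(r))$ in every case is supplied by the ``furthermore'' clauses, which propagate the sign of the fiber's scalar (resp. Gaussian) curvature to the total space.
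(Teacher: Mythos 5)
Your proposal is correct and is exactly the argument the paper intends: the corollary is stated as a direct specialization of Corollary \ref{cor:1-2} and the two preceding low-fiber-dimension corollaries to the product submersion $S^p\times B\to B$, with $g(r)$ identified as the canonical variation. Your dimension bookkeeping ($\dim B\ge 1$ for $p=2$, $\dim B\ge 2$ for $p=3$) and the separate handling of the degenerate case $\dim B=0$ for $p\ge 4$ are both accurate.
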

\begin{Nremark}
We notice that the Riemannain product $$(M, g(r))=(S^3(r),\hat g(r))\times
(S^{q},ds^2) $$ of the standard spheres (where $\hat g(r)$ is a round
metric of radius $r$ and $q\geq 2$) is such that for small enough
$r$, the sectional curvature of $g(r)$ is nonnegative, Ricci
curvature, the Einstein tensor and the $h_4$-curvature are all
positive, but its $\sigma_2$-curvature is negative.
\end{Nremark}
\begin{Nremark}
Recall that any finitely presented group can be realised as the
fundamental group of a compact manifold of an arbitrary dimension
$n\geq 4$. Consequently, the above examples show that
any finitely presented group can be realized as the
fundamental group of a compact $n$-manifold of positive scalar
curvature and, at the same time, of negative $\sigma_2$-curvature for
any arbitrary $n\geq 6$.

The same is true for compact $n$-manifolds of positive
$\Gamma_2$-curvature for $n\geq 8$. In Section \ref{fund:group}, we
will show that this is still true for $n\geq 6$.
\end{Nremark}

\section{Proofs of main theorems}\label{sec:proofs}
\subsection{A surgery theorem for metrics with positive 
$\Gamma_2$-curvature}\label{sec:surgery}
 Let $X$ be a closed manifold,
$\dim X =n$, and $S^p\subset X$ be an embedded sphere in $X$ with
trivial normal bundle. We assume that it is embedded together with its
tubular neighbourhood $S^p\times D^{q}\subset X$.  Here, $p+q =
n$. Then we define $X'$ to be the manifold resulting from the surgery
along the sphere $S^p$:
$$
X' = (X\setminus (S^p\times D^{q}))\cup_{S^p\times S^{q-1}}
(D^{p+1}\times S^{q-1}) .
$$ The codimension of the sphere $S^p\subset X$ is called a
\emph{codimension of the surgery}. In the above terms, the codimension
of the above surgery is $q$.
\begin{theorem}\label{surgery}
Let $X$ be a compact manifold with $\dim X\geq 5$ and $X'$ be a
manifold obtained from \ $X$ \ by a surgery of codimension at least
$5$. \ Assume that $X$ has a Riemannian metric $g$ with positive
$\Gamma_2$-curvature.  Then there exists a metric $g'$ on $X'$ with
positive $\Gamma_2$-curvature.
\end{theorem}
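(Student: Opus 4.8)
The plan is to follow the classical Gromov--Lawson surgery strategy, adapted so that it preserves positive $\Gamma_2$-curvature rather than merely positive scalar curvature. Recall that by Proposition \ref{weaker} the condition $\sigma_2>0,\ \sigma_1>0$ is open in the $C^2$-topology on the space of metrics, and it implies positive definiteness of the Einstein tensor. The first step is to set up the standard surgery picture: near the embedded sphere $S^p\subset X$ the metric looks like a product $\hat g_{\mathrm{round}}(r)\oplus g_{D^q}$ on $S^p\times D^q$, and the surgery replaces a tube $S^p\times D^q$ by $D^{p+1}\times S^{q-1}$. Gromov--Lawson construct a new metric that agrees with $g$ away from the tube, is a product with a round sphere $S^{q-1}(\delta)$ of small radius near the attaching region, and interpolates through a ``bent tube'' in between. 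The key analytic fact is that as one travels along the tube and shrinks the normal $S^{q-1}$ factor, the intrinsic curvature of the small round sphere of codimension $q$ contributes a large positive term to the ambient curvature, so that positivity is preserved provided the codimension is large enough.

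The heart of the argument is a careful curvature estimate along the bent-tube region. First I would parametrise the hypersurface tube by its geodesic distance from $S^p$ and express the ambient curvature tensor via the Gauss and Codazzi equations in terms of the induced metric on the tube and its second fundamental form. The dangerous curvature terms are the mixed sectional curvatures; the beneficial ones are the sectional curvatures tangent to the shrinking $S^{q-1}(\delta)$ factor, which grow like $\delta^{-2}$. Because $\sigma_2$ is, up to normalisation, a quadratic expression in the eigenvalues of the Schouten tensor (equivalently, by the identity $\sigma_1^2=2\sigma_2+\|A\|^2$ and the product formula in the proof of the even-dimensional Proposition above, a sum of products of the form $(\lambda_{i_1}+\cdots)(\sigma_1-\lambda_{i_1}-\cdots)$), each such product involves at least one of the $q-1$ large positive eigenvalues coming from $S^{q-1}(\delta)$ paired with a sum of the remaining eigenvalues. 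When $q-1$ is large enough, the number of these dominant positive products overwhelms the bounded ``bad'' contributions, forcing $\sigma_2>0$; simultaneously $\sigma_1\to+\infty$ since it is the trace. This is exactly where the codimension hypothesis $q\geq 5$ enters, mirroring the combinatorial counting already seen in the submersion computation of Theorem \ref{Thm1} and Corollary \ref{cor:1-2}, where $\dim F=p\geq 4$ (i.e. $q-1\geq 4$) was needed to make the leading coefficient $n(p-4)+4$ nonnegative.

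Concretely, the steps I would carry out are: (1) reduce to the model where on the tube the metric is $dt^2 \oplus \hat g \oplus \delta(t)^2 g_{S^{q-1}}$ with a profile function $\delta(t)$ bending the tube; (2) compute, via O'Neill/Gauss--Codazzi, the Schouten-tensor eigenvalues of this warped metric, isolating the $\Theta(\delta^{-2})$ eigenvalues tangent to the sphere factor and the $O(1)$ remaining eigenvalues; (3) insert these into $\sigma_1$ and $\sigma_2$ and verify that for codimension $q\geq 5$ and small enough $\delta$ both are positive along the whole tube, uniformly; (4) invoke the openness of the $\Gamma_2$-condition to smooth the corners of the interpolation and glue the ends into the unchanged metric on $X\setminus(S^p\times D^q)$, obtaining $g'$ on $X'$. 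The main obstacle is step (3): unlike scalar curvature, which only sees $\sigma_1$ and gains a single large positive summand, $\sigma_2$ is a balance of many pairwise products, so one must check that the bent-tube geometry does not create a large negative eigenvalue of the Schouten tensor that, paired with the large positive ones, could drive $\sigma_2$ negative. Controlling the sign of the second fundamental form of the bend (so that the Ricci/Schouten eigenvalues in the bending directions stay bounded below) is the delicate point, and it is here that one must choose the Gromov--Lawson profile curve with some care rather than use it as a black box.
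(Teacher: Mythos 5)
Your proposal takes a genuinely different route from the paper: the paper does not redo the Gromov--Lawson bending construction at all, but instead invokes Hoelzel's general surgery theorem (Theorem \ref{Hoel}), which reduces everything to two finite-dimensional checks: (i) that $C(\Gamma_2^+)$ is an open convex $O(n)$-invariant cone in $\mathcal{C}_1(\mathbb{R}^n)$ (convexity coming from the form of $\sigma_2$ on the splitting $W\oplus gA_1\oplus\mathbb{R}g^2$, or from concavity of $\sigma_2^{1/2}$), and (ii) that the model $S^{c-1}\times\mathbb{R}^{n-c+1}$ has $\sigma_2=\frac{(c-2)^2(c-1)}{8(n-2)^2(n-1)}\bigl(n(c-5)+4\bigr)>0$ for $c\geq 5$. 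The convexity of the curvature condition is the structural input that replaces all of the delicate tube estimates; your plan never mentions it.

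As written, your argument has a genuine gap precisely at the step you yourself flag as delicate, namely step (3), and the asymptotics you state in step (2) are wrong in a way that matters. For the metric $dt^2\oplus\hat g\oplus\delta^2 g_{S^{q-1}}$, the Schouten eigenvalues in the directions transverse to the small sphere are not $O(1)$: since $A=\frac{1}{n-2}\left(\Ric-\frac{\Scal}{2(n-1)}g\right)$ and $\Scal\sim(q-1)(q-2)\delta^{-2}$, they are large \emph{negative}, of order $-\delta^{-2}$. Hence the cross products in $\sigma_2$ (one sphere eigenvalue times one transverse eigenvalue) are negative of the same order $\delta^{-4}$ as the positive sphere--sphere products, and the sign of $\sigma_2$ is decided by the exact coefficient $n(q-5)+4$, not by ``many positive products overwhelming bounded bad contributions.'' That balance works out on the exact model (this is the paper's computation (ii)), but in the bent-tube region the metric is not the model, and because $\sigma_2$ is quadratic one cannot conclude positivity from a single dominant term as in the scalar-curvature case. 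To close the gap you would need either a genuinely new quantitative control of the profile curve, or the observation that $C(\Gamma_2^+)$ is a convex cone so that the curvature along the transition stays inside the condition --- which is exactly Hoelzel's theorem, i.e., the paper's proof. Until step (3) is actually carried out, what you have is an outline rather than a proof.
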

The above surgery theorem will be used  in the following subsections to prove the main theorems of this paper, its proof will be postponed to the next section.
\subsection{Fundamental groups of manifolds with  positive 
$\Gamma_2$-curvature}\label{fund:group} We already mentioned that
Corollary \ref{example} implies that there are no restrictions on the
fundamental group of a compact manifold of positive scalar curvature
and positive $\sigma_2$-curvature in dimensions at least eight. We use
surgery Theorem \ref{surgery} to prove that the same holds in
dimensions $6$ and $7$:
\begin{thm-c}\label{fund-group} Let $\pi$ be a finitely presented group.
Then for every $n\geq 6$, there exists a compact $ n$-manifold M with
positive $\Gamma_2$-curvature such that $\pi_1(M)=\pi$.
\end{thm-c}
\begin{remark}
We emphasize that the previous result is no longer true in dimensions
$4$ and $3$. In dimension $4$, the positivity of the
$\Gamma_2$-curvature implies the positivity of the Ricci
curvature; consequently, the
fundamental group must be finite. The same holds for the fundamental
group of a $3$-dimensional compact manifold of positive
$\Gamma_2$-curvature as in this case, the sectional
curvature must be positive. However, it remains an open question
whether there exist any restriction on the fundamental group of
$5$-dimensional compact manifold of positive $\Gamma_2$-curvature.
\end{remark}
\begin{proof}[Proof of Theorem C] 
Let $n\geq 6$ and $\pi$ be a group which has a presentation consisting
of $k$ generators $x_{1},x_{2},...,x_{k}$ and $\ell$ relations
$r_{1},r_{2},...,r_{\ell}$.  Let the manifold $S^{1}\times S^{n-1}$ be
given a standard product metric which has positive
$\Gamma_2$-curvature.  Since $\pi_1(S^{1}\times S^{n-1})\cong \Z$, the
Van-Kampen theorem implies that the fundamental group of the connected
sum
$$
N:= \#k (S^{1}\times S^{n-1})
$$ is a free group on $k$ generators, which we denote by
$x_{1},x_{2},...,x_{k}$. By the surgery Theorem \ref{surgery}, $N$
admits a metric with positive $\Gamma_2$-curvature.

We now perform surgery $\ell$-times on the manifold $N$ such that each
surgery is of codimension $n-1\geq 5$, killing in succession the
elements $r_{1},r_{2},...,r_{\ell}$. Again, according to Theorem
\ref{surgery}, the resulting manifold $M$ has fundamental group
$\pi_1(M)\cong \pi$ and admits a metric with positive
$\Gamma_2$-curvature, as desired.
\end{proof}
\subsection{Existence of metrics with positive $\Gamma_2$-curvature}
Let $M$ be a $3$-connected manifold. In particular, $M$ has a
canonical spin-structure. We use a standard notation $p_i(M)$ for the 
Pontryagin classes of $M$. There are two cases to consider here:
\begin{enumerate}
\item[(1)] The manifold $M$ is not string, i.e., $\frac{1}{2}p_1(M)\neq 0$.
\item[(2)] The manifold $M$ is string, i.e., $\frac{1}{2}p_1(M)=0$.
\end{enumerate}
In case (1), the manifold $M$ is spin and it determines a
cobordims class $[M]\in \Omega^{\spin}_n$, where $\Omega^{\spin}_n$ is
the $\spin$-cobordims group. We recall also that there is the
homomorphism $\alpha: \Omega^{\spin}_n\to KO_n$ evaluating the index
of the Dirac operator.  It is well-known that a simply connected
$\spin$-manifold $M$ of dimension at least five admits a metric with
positive scalar curvature if and only if $\alpha([M])=0$ in $KO_n$.
\begin{proof}[Proof of Theorem A] 
Let $M$ be $3$-connected, non-string (i.e. case (1) above) such that
$M$ admits a metric of positive scalar curvature. In particular, this
means that $\alpha([M])=0$. Then, according to \cite[Theorem
  B]{Stolz1}, there exists a spin cobordism between $M$ and $M'$,
where $M'$ is a total space of a geometric $\H\P^2$-bundle and has a
metric with positive $\Gamma_2$-curvature by Corollary \ref{HP2}.

We recall the following result \cite[Proposition
    3.7]{Bot-Lab}:
\begin{lemma}\label{bot-lab1}
Let $M$ be a 3-connected, non-string manifold with $\dim M\geq
9$. Assume $M$ is spin cobordant to a manifold $M'$. Then $M$ can be
obtained from $M'$ by surgeries of codimension at least five.
\end{lemma}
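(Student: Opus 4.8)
The plan is to produce the surgeries by realizing $M$ as the top of a spin cobordism on $M'$ whose handles, read from the $M$-side, all have index at least $5$. Recall that a handle of index $k$ in an $(n+1)$-dimensional cobordism corresponds to a surgery on an embedded $S^{k-1}$ of codimension $n-k+1$ in the level set; thus, reversing such a cobordism, handles of index $\geq 5$ on the $M$-side become handles of index $\leq n-4$ on the $M'$-side, i.e.\ surgeries on $M'$ of codimension $\geq 5$, which is the desired conclusion.

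First I would reduce to the case in which $M'$ is also $3$-connected. Since $\dim M' = n \geq 9$, any class in $\pi_j(M')$ with $j \leq 3$ is carried by an embedded $S^j$ with trivial normal bundle (the relevant obstructions vanish on a spin manifold for $j\leq 3$), and surgery on such a sphere has codimension $n-j \geq 6$. Performing these surgeries in turn kills $\pi_1,\pi_2,\pi_3$ without creating new low-dimensional homotopy, and replaces $M'$ by a $3$-connected manifold obtained from it through codimension-$\geq 5$ surgeries; as codimension-$\geq 5$ surgeries compose, it suffices to treat this new $M'$. I may therefore assume $M$ and $M'$ are both $3$-connected and spin cobordant.

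Now fix a spin cobordism $W^{n+1}$ from $M'$ to $M$ and make $W$ itself $3$-connected by surgery on its interior: classes in $\pi_j(W)$, $j\leq 3$, are killed by surgery on $j$-spheres in $\operatorname{int} W$, which have codimension $\geq n-2$ and so neither touch $\partial W$ nor affect the spin structure. The long exact sequence of $(W,M)$ then shows the pair is $3$-connected, so a handle decomposition of $W$ on $M$ can be stripped of all handles of index $\leq 3$ by the standard elimination-and-trading moves, the hypothesis $n+1\geq 10$ keeping us in the stable range. A decomposition on $M$ has no handles of index $4$ exactly when $H_4(W,M)=0$, equivalently when $H_4(M)\to H_4(W)$ is onto, equivalently (by Hurewicz) when $(W,M)$ is $4$-connected; granting this, Smale cancellation of the $4$- against the $5$-handles leaves only handles of index $\geq 5$, and reversing $W$ gives the surgeries sought.

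The crux --- and the only step that uses the non-string hypothesis --- is the vanishing of $H_4(W,M)=\operatorname{coker}\bigl(H_4(M)\to H_4(W)\bigr)$. The decisive invariant is $\tfrac{1}{2}p_1(W)\in H^4(W;\Z)$: a class $\alpha\in H_4(W)\cong\pi_4(W)$ is represented by an embedded $S^4$ whose normal bundle is stably classified by $\tfrac12 p_1$, so $\alpha$ is killable by surgery precisely when $\langle \tfrac12 p_1(W),\alpha\rangle=0$. I would first surger a generating set of $\ker\bigl(\tfrac12 p_1(W)\colon H_4(W)\to\Z\bigr)$, after which $\tfrac12 p_1(W)$ embeds $H_4(W)$ into $\Z$; because $\tfrac12 p_1(W)$ restricts to $\tfrac12 p_1(M)$ and $M$ is non-string, the image of $H_4(M)$ is a nonzero subgroup, and the aim is to see that it is all of $H_4(W)$. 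The delicate point I expect to be the main obstacle is exactly this matching: one must rule out a class of $H_4(W)$ whose value of $\tfrac12 p_1$ is a proper divisor of those realized on $M$, since such a class has nontrivial normal bundle and cannot be removed by surgery. Controlling this divisibility --- through the interplay of the non-string condition with the freedom in $W$ and with the structure of the cobordism furnished by \cite{Stolz1} --- is where the real work lies.
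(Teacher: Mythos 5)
Your outline follows the standard Gromov--Lawson/Stolz handle-decomposition strategy, and everything up to the last step is sound: making $W$ and the pair $(W,M)$ $3$-connected by interior surgeries (the normal-bundle obstructions in $\pi_{j-1}(SO(n+1-j))$ do vanish for $j\leq 3$ on a spin manifold), identifying the condition ``no handles of index $\leq 4$ from the $M$-side'' with $H_4(W,M)=0$, and recognizing that a class of $\pi_4(W)\cong H_4(W)$ can be surgered away exactly when $\tfrac12 p_1(W)$ vanishes on it. But the argument stops precisely at the point that constitutes the actual content of the lemma. After you surger out $\ker\bigl(\tfrac12 p_1(W)\colon H_4(W)\to\Z\bigr)$, you are left with $H_4(W)\cong\Z$ (or $0$), say generated by $\gamma$ with $\langle\tfrac12 p_1(W),\gamma\rangle=c$, and the non-string hypothesis only tells you that the image of $H_4(M)$ is a nonzero subgroup $m\Z\gamma$. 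Unlike the classical non-spin analogue, where the obstruction takes values in $\Z/2$ and ``nonzero'' forces ``everything'', here the cokernel $\Z/m$ can a priori be nontrivial; its generator $\gamma$ has $\tfrac12 p_1$-value $c$ properly dividing the divisibility $d=mc$ of $\tfrac12 p_1(M)$, so every embedded sphere representing it has nontrivial normal bundle, and it can be neither killed by a further surgery nor absorbed into the image of $H_4(M)$. You name this obstacle honestly, but you do not remove it: without an argument that the spin cobordism $W$ can be chosen or modified so that the divisibility of $\tfrac12 p_1(W)$ on $H_4(W)$ matches that of $\tfrac12 p_1(M)$ on $H_4(M)$, the proof is incomplete, and this matching is exactly where extra (bordism-theoretic) input beyond the bare hypothesis ``$M$ and $M'$ are spin cobordant'' is needed.

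For what it is worth, the paper itself offers no proof to compare against: it states the lemma as a recollection of \cite[Proposition 3.7]{Bot-Lab} and uses it as a black box. So your write-up goes further than the paper does, but it halts at the same place where the cited reference has to do its real work, and the statement cannot be considered proved from what you have written.
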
 
Thus, we can use the surgery Theorem \ref{surgery} to ``push'' a
metric with positive $\Gamma_2$-curvature from $M'$ to $M$.
\end{proof} 
\begin{proof}[Proof of Theorem B] 
The proof is completely analogous to the arguments given to prove
\cite[Theorem B]{Bot-Lab}.
\end{proof}
If $M$ is string cobordant to zero, then the conclusion of the theorem
holds for $M$. It is known that $\Omega_n^{\str}=0$ for $n=11$ or
$n=13$; therefore any compact $3$-connected string manifold of
dimension $11$ or $13$ always has a metric with positive scalar
curvature and positive $\Gamma_2$-curvature.

\section{A general surgery theorem and applications}\label{sec:general}
\subsection{A general surgery theorem}
S.  Hoelzel proved in  \cite{Hoelzel}  an interesting general surgery theorem  which can be used to
to  provide an easy proof of  Theorem
\ref{surgery}.

Let us first introduce the general surgery theorem.  Let $\mathcal{C}_1(\Bbb{R}^n)$ denote the
vector space of curvature structures on the Euclidean space
$\mathbb{R}^n$ that satisfy the first Bianchi identity. Recall that
the orthogonal group $O(n)$ acts in a natural way on
$\mathcal{C}_1(\mathbb{R}^n)$ and that the latter space is endowed
with a canonical Euclidean inner product.

We shall say that a subset $C$ of $\mathcal{C}_1(\Bbb{R}^n)$ is
\emph{a curvature condition} if $C$ is an open convex $O(n)$-invariant
cone in $\mathcal{C}_1(\Bbb{R}^n)$.

We say that a Riemannian manifold $(M,g)$ satisfies the above
(pointwise) curvature condition $C$ if for every $p\in M$ and for any
linear isometry $i:\mathbb{R}^n \rightarrow T_pM$, the pull back by
$i$ of the Riemann curvature tensor $R$ of $(M,g)$ belongs to the
subset $C$.
\begin{theorem}\label{Hoel}
{\rm (Hoelzel, \cite[Theorem A]{Hoelzel})}
Let $C\subset \mathcal{C}_1(\Bbb{R}^n)$ be a curvature condition that
is satisfied by the standard Riemannian product metric on
$S^{c-1}\times \mathbb{R}^{n-c+1}$ for some $c$, $3\leq c\leq n$.

If a Riemannian manifold $(M^n, g)$ satisfies the curvature condition
$C$, then so does any manifold obtained from $M^n$ by surgeries of
codimension at least $c$.
\end{theorem}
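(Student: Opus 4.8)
The plan is to run the Gromov--Lawson bending construction with the curvature condition $C$ in place of positive scalar curvature, replacing the explicit scalar-curvature inequalities of \cite{GroLaw} by the three structural properties of $C$: it is an \emph{open}, \emph{convex}, $O(n)$-\emph{invariant} cone. Let $q\geq c$ be the codimension of the surgery and $p=n-q$, and identify a tubular neighbourhood of the surgery sphere $S^p\subset M$ with $S^p\times D^q(\bar r)$. Because the condition $C$ is pointwise and $O(n)$-invariant it is frame-independent, and because $C$ is open it is stable under sufficiently small perturbations of the curvature tensor; hence it suffices to produce a metric that coincides with $g$ outside the tube and whose curvature tensor lies in $C$ at every interior point. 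The surgered metric is then obtained by excising the innermost part of the deformed tube and gluing a standard cap.

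First I would isolate the purely algebraic input. Writing $R_{S^{m-1}}\in\mathcal{C}_1(\mathbb{R}^n)$ for the curvature tensor that is the constant curvature $+1$ tensor on a fixed $m$-dimensional coordinate subspace and zero otherwise (the curvature of $S^{m-1}\times\mathbb{R}^{n-m+1}$), I claim the hypothesis $R_{S^{c-1}}\in C$ forces $R_{S^{q-1}}\in C$ for every $q\geq c$. Indeed, the $O(q)$-average of $R_{S^{c-1}}$ over rotations of the $q$-dimensional subspace is an $O(q)$-invariant algebraic curvature tensor supported on that subspace; such tensors form a one-dimensional space spanned by $R_{S^{q-1}}$, so the average equals $\alpha R_{S^{q-1}}$ with $\alpha>0$. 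The orbit $O(q)\cdot R_{S^{c-1}}$ is a compact subset of the open convex set $C$, so its convex hull, which contains the average, is a compact subset of $C$; as $C$ is a cone this yields $R_{S^{q-1}}\in C$. This is where the requirement $c\leq q$, i.e.\ codimension at least $c$, is used algebraically, and where $c\geq 3$ guarantees that the neck sphere $S^{q-1}$ is at least a $2$-sphere and so carries genuine positive curvature.

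Next the geometric core. In Fermi coordinates $(y,r,\theta)$ on the tube I would work inside $M\times\mathbb{R}$ with the product metric and consider the hypersurface $M_\gamma\subset M\times\mathbb{R}$ determined, over each $y\in S^p$ and each $\theta\in S^{q-1}$, by a profile curve $\gamma(s)=(r(s),t(s))$ in the $(r,t)$ quarter-plane. The curve starts horizontal, so that $M_\gamma$ reproduces $(M,g)$ near the boundary of the tube, and is bent with nonnegative geodesic curvature until it becomes a vertical ray $r\equiv r_0$, along which the induced metric is a Riemannian product whose normal factor is the round sphere $S^{q-1}(r_0)$. The Gauss equation expresses the curvature of $M_\gamma$ as the restricted ambient curvature --- a rotated copy of $R_M$, hence in $C$ by $O(n)$-invariance --- plus the quadratic expression in the second fundamental form. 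The principal curvatures of $M_\gamma$ are the geodesic curvature of $\gamma$ in the curve direction, a term $\sim f(\vartheta)/r$ (with $f\to1$ as $\gamma$ turns vertical) in the $S^{q-1}$-directions, and bounded terms in the base directions; the resulting quadratic contributes, on the $2$-planes tangent to $S^{q-1}$, precisely a positive multiple of $R_{S^{q-1}}$, plus cross terms of strictly lower order in $1/r$. Finally the vertical end is capped by a product $D^{p+1}\times S^{q-1}(r_0)$: taking $r_0$ small makes the curvature of this cap, after rescaling by $r_0^2$, converge to $R_{S^{q-1}}\in C$, so openness makes it admissible, while the base metric is first rounded along a long product neck, a harmless homotopy because the tiny factor $S^{q-1}(r_0)$ dominates the curvature throughout.

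The main obstacle is the \emph{uniform} verification that the curvature of $M_\gamma$ stays in $C$ along the entire profile curve, including the transition region where $r$ is neither maximal nor minimal. Here all three properties of $C$ are used together. Since $C$ is a convex cone it is closed under sums of its elements, so the leading part $R_M+(\text{positive multiple of }R_{S^{q-1}})$ already lies in $C$; the cross and base terms form an error $E$ that I would bound by $\|E\|\leq \varepsilon\cdot(\text{norm of the leading part})$ after choosing the tube thin and the bending profile carefully, and then absorb using openness. The point requiring genuine care is to keep the leading part in a fixed compact subset of the open cone $C$ \emph{uniformly} in $s$ and in the base point $y\in S^p$, so that a neighbourhood of definite size is available to absorb $E$; this I would obtain from a compactness argument over the compact sphere $S^p$ and its orthonormal frame bundle, together with the cone property to normalize the scale set by $1/r$. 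Once this uniform estimate is in place, the bent-and-capped metric satisfies $C$ everywhere, and since it agrees with $g$ outside the tube it descends to the required metric on the surgered manifold.
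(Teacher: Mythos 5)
You should first be aware that the paper contains no proof of Theorem \ref{Hoel} at all: it is quoted as Theorem A of \cite{Hoelzel} and used strictly as a black box (the paper's own contribution in Section 5 is only to verify the hypotheses of Hoelzel's theorem for the cones $C(\Gamma_k^+)$). So the comparison is with Hoelzel's proof, and measured against that your sketch is a correct reconstruction of the actual argument: Hoelzel's proof is indeed the Gromov--Lawson bending construction of \cite{GroLaw} with the explicit scalar-curvature inequalities replaced by the structural properties open/convex/$O(n)$-invariant cone, exactly as you organize it. Two points of detail. First, your averaging lemma (orbit average of $R_{S^{c-1}}$ over rotations of the larger subspace lies in the convex hull of a compact subset of $C$, and the $O(m)$-invariant curvature structures on an $m$-dimensional subspace form the line spanned by the round tensor, with positive coefficient because the scalar trace is positive for $c\geq 3$) is the right mechanism for passing from codimension exactly $c$ to codimension at least $c$, though you have an index slip: the curvature structure of $S^{m-1}\times\mathbb{R}^{n-m+1}$ is supported on an $(m-1)$-dimensional subspace, so the averaging group should be $O(q-1)$, not $O(q)$. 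Second, the step you correctly single out as the main obstacle --- keeping the leading part $R_M+t\,R_{S^{q-1}}$, $t\in[0,\infty)$, in a set of rays uniformly interior to the open cone so that a neighbourhood of definite angular size absorbs the error terms --- is precisely what Hoelzel axiomatizes as the \emph{inner cone condition} with respect to the neck structure (a notion the paper itself recalls in its final subsection when explaining why $C(h_4^+)$ escapes the method): his Theorem B is proved under that quantitative condition, and Theorem A follows because an open convex $O(n)$-invariant cone containing the neck structure satisfies it automatically, which is the content of your compactness-plus-scale-normalization remark. Your proposal leaves this uniform estimate, and the final cap/isotopy step (delicate already in \cite{GroLaw}), at the level of a plan rather than a verification, but the route you describe is the one that works, so as a blind sketch it matches the cited proof in both strategy and the two key lemmas.
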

\begin{remark}
It would be great to see Theorem \ref{Hoel} generalized for families
of metrics in the spirit of papers \cite{Walsh1,Walsh2}. This would
allow us to understand much more about the topology of the space of
metrics satisfying a curvature condition as above.
\end{remark}

\subsection{Applications} 
Recall that a curvature structure $R\in \mathcal{C}_1(\Bbb{R}^n)$
decomposes into $R=W+gA$ where $g$ denotes the Euclidean metric on
$\Bbb{R}^n$, $W$ is trace free, $A$ is a symmetric bilinear form
called the Schouten tensor, and the product $gA$ is the
Kulkarni-Nomizu product.  We shall say that the curvature structure
$R$ has positive $\Gamma_r$-curvature if the $k$-th elementary
symmetric function $\sigma_k(A)$ in the eigenvalues of $A$ is positive
for all $k$ with $1\leq k\leq r$.

\subsubsection{Proof of Theorem \ref{surgery}}
Let $C(\Gamma_2^+)$ be the subset of $\mathcal{C}_1(\Bbb{R}^n)$ consisting of
curvature structures with positive $\Gamma_2$-curvature.
It turns out that $C(\Gamma_2^+)$  is a curvature condition in
the above sense. Furthermore, one can check without difficulties that
the $\sigma_2$-curvature of the standard product $S^{c-1}\times
\mathbb{R}^{n-c+1}$ is equal to
$$
\sigma_2=\frac{(c-2)^2(c-1)}{8((n-2)^2(n-1)}(n(c-5)+4).
$$ This value is clearly positive for $c\geq 5$. The previous theorem
establishes then  the stability of positive $\Gamma_2$-curvature under surgeries
of codimension at least five. Therefore, we recover the surgery result
of Theorem \ref{surgery}.

Next, we provide below a geometric proof of the convexity of the set
$C(\Gamma_2^+)$.  

First, recall that we have the following standard orthogonal
decomposition of $\mathcal{C}_1(\Bbb{R}^n)$ into irreducible subspaces
\begin{equation}
\mathcal{C}_1(\Bbb{R}^n)=W\oplus gA_1\oplus \Bbb{R} g^2,
\end{equation}
where $W$ is the subspace consisting of trace free curvature
structures, $g$ denotes the Euclidean inner product of the Eucldean
space $ \Bbb{R}^n$, $A_1$ is the space of trace free bilinear forms on
$\Bbb{R}^n$, $gA_1=\{ga : a\in A_1\}$, the products $ga$ and $g^2=gg$
being the Kulkarni-Nomizu product of bilinear forms, and
$\Bbb{R}g^2=\{\lambda g^2:\lambda\in \Bbb{R}\}$.

The map $\sigma_2$ is then a quadratic function defined on
$\mathcal{C}_1(\Bbb{R}^n)$. With respect to the previous splitting, it
sends a curvature structure $R=\omega_2+g\omega_1+g^2\omega_0$ to
\begin{equation}
\sigma_2(R)=\sigma_2(\omega_2+g\omega_1+g^2\omega_0)=
-\frac{1}{2(n-2)}\|g\omega_1\|^2+\frac{1}{4}\|g^2\omega_0\|^2.
\end{equation}
Here the norms are the induced norms from the natural Euclidean
product on $\mathcal{C}_1(\Bbb{R}^n)$. In particular, the curvature
structures with null $\sigma_2$ form a cone in
$\mathcal{C}_1(\Bbb{R}^n)$ as in the figure below

\begin{figure}[htb!]
\begin{picture}(0,1)
\put(121,80){{\small $\sigma_2\!<\!0$}} 
\put(155,32){{\small $gA_1$}}
\put(-5,-5){{\small $W$}} 
\put(39,136){{\small ${\mathbb R} g^2$}}
\put(65,125){{\small $\sigma_2\!>\!0$}} 
\put(124,115){\vector(-1,0){10}}
\put(125,113){{\small $\sigma_2\!=\!0$}} 
\end{picture}
\includegraphics[height=2in]{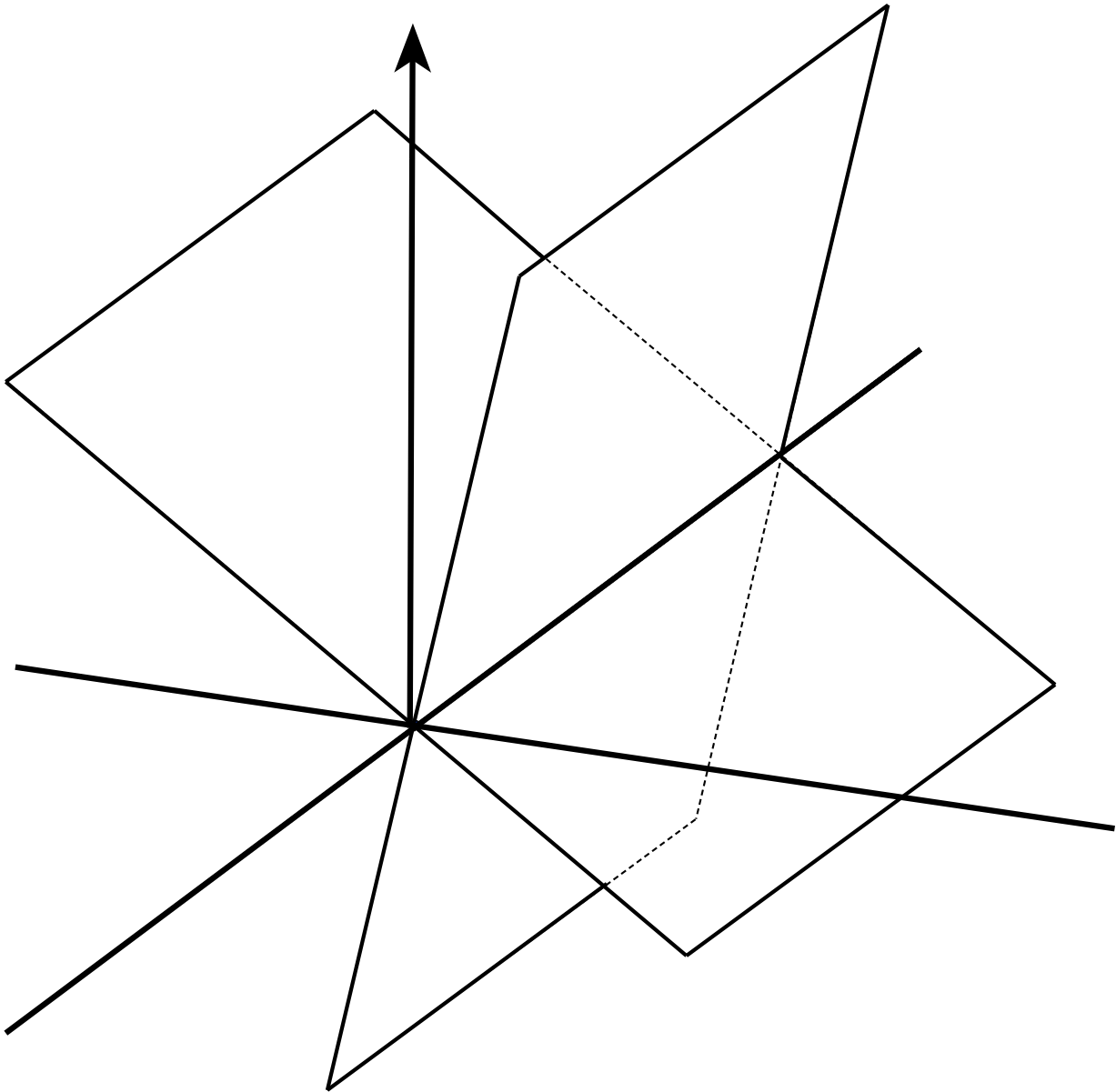}
\caption{The subsets  $\sigma_2=0,\sigma_2>0$ and $\sigma_2< 0$.}
\end{figure}\label{cone-sigma2}

Note that the curvature structures with positive $\sigma_1$ (that is
positive scalar curvature) form the upper half of the space
$\mathcal{C}_1(\Bbb{R}^n)$, that is $\{R=\omega_2+g\omega_1+\lambda
g^2:\lambda >0\}$.  It is clear now from the previous discussion that
the subset $C(\Gamma_2^+)$ is a convex cone as illustrated in the
figure below

\begin{figure}[htb!]
\begin{picture}(0,1)
\put(155,32){{\small $gA_1$}}
\put(-5,-5){{\small $W$}} 
\put(39,136){{\small ${\mathbb R} g^2$}}
\put(67,124){{\small $C(\Gamma_2^+)$}} 
\end{picture}
\includegraphics[height=2in]{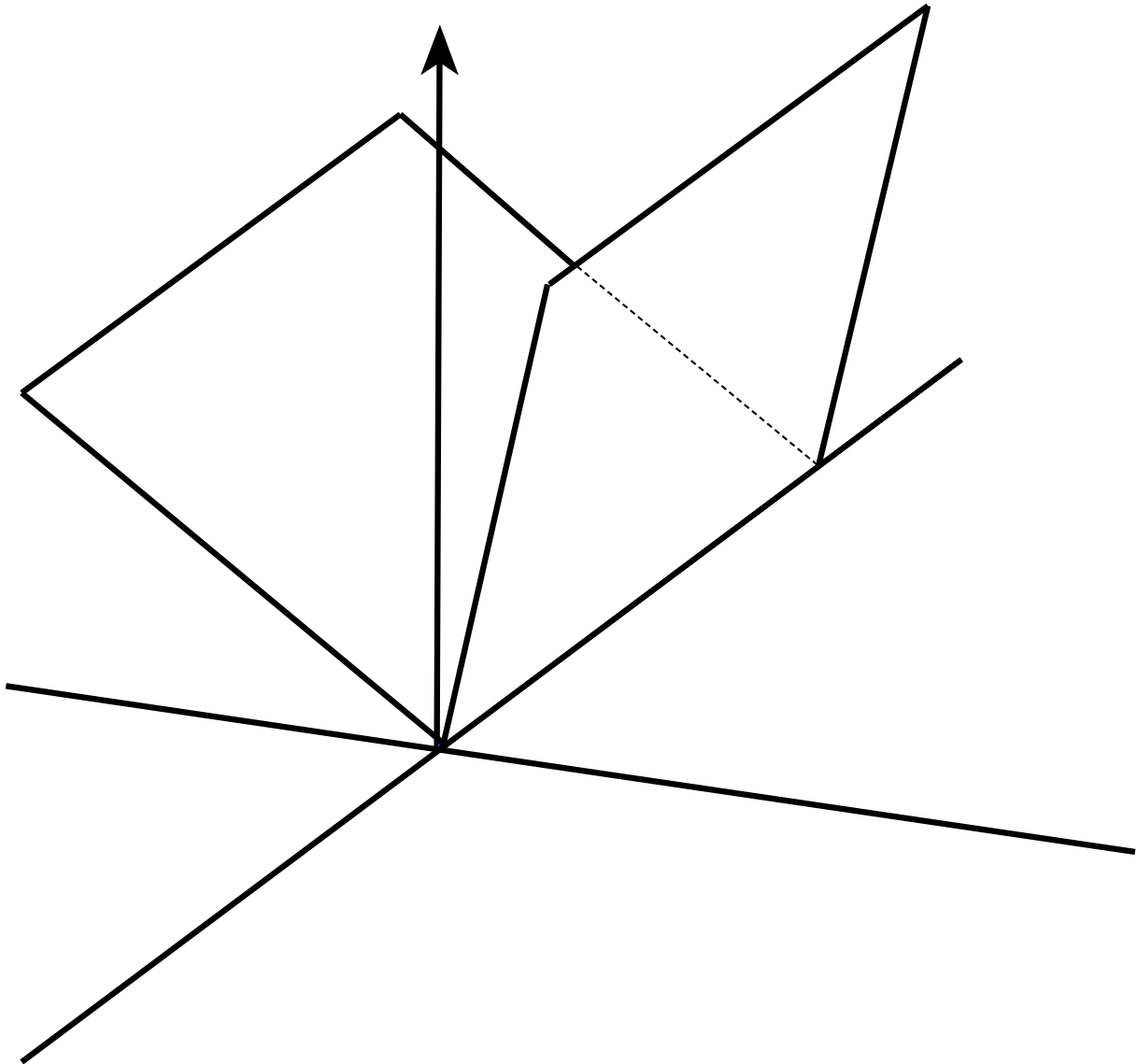}
\caption{The cone  $C(\Gamma_2^+)$.}
\end{figure}\label{cone-Gamma2}
\subsubsection{A surgery theorem for higher $\Gamma_k$-curvatures}
Now we study the  stability of higher $\sigma_k$-curvatures under
surgeries. We say that a metric $g$ has \emph{positive
  $\Gamma_k$-curvature} if $\sigma_i>0$ for all $1\leq i\leq k$.

Let $C(\Gamma_k^+)$ be the subset of $\mathcal{C}_1(\Bbb{R}^n)$
consisting of curvature structures with positive
$\Gamma_k$-curvature. It is clear that $C(\Gamma_k^+)$ is an open
subset, we are going to show that it is convex as well.

Let $R=\omega_2+g\omega_1+g^2\omega_0$ and
$\bar{R}=\bar{\omega}_2+g\bar{\omega}_1+g^2\bar{\omega}_0$ be two
curvature structures in $\mathcal{C}_1(\Bbb{R}^n)$ and $t\in [0,1]$,
then the curvature structures $(1-t)R+t\bar{R}$ splits according to
the orthogonal irreducible decomposition of $\mathcal{C}_1(\Bbb{R}^n)$
to
\[(1-t)R+t\bar{R}=((1-t)\bar{\omega}_2+t\omega_2) +
g((1-t)\bar{\omega}_1+t\omega_1)+g^2((1-t)\bar{\omega}_0+t\omega_0).\]
Consequently, we have
\begin{equation*}
\begin{array}{lll}
\bigl\{\sigma_k\left((1-t)R+t\bar{R}\right)\bigr\}^{1/k}&=&
\bigl\{\sigma_k\left( (1-t)\bar{\omega}_1+t\omega_1)
\right)\bigr\}^{1/k}
\\
\\
&\geq &  (1-t)\{\sigma_k(\bar{\omega}_1)\}^{1/k}+t\{\sigma_k(\omega_1)\}^{1/k}
\\
\\
&= &  (1-t)\{\sigma_k(\bar{R})\}^{1/k}+t\{\sigma_k(R)\}^{1/k}.
\end{array}
\end{equation*}
Here the $\sigma_k$ of a bilinear form (for instance $\omega_1$ and
$\bar{\omega}_1$) coincides with the usual symmetric function in the
eigenvalues of its corresponding operator, in particular it is well
known that $\{\sigma_k\}^{1/k}:\Bbb{R}^N\rightarrow \Bbb{R}$ is a
concave function for every $N$ and every $k\leq N$.

Consequently, the function $\{\sigma_k\}^{1/k}:
\mathcal{C}_1(\Bbb{R}^n) \rightarrow \Bbb{R}$ is concave and therefore
the subset $\{R\in \mathcal{C}_1(\Bbb{R}^n):\sigma_k(R)>0\}$ is
convex.  The subset $C(\Gamma_k^+)$ is then an intersection of convex
sets and therefore it is a convex subset as well.

We are now ready to prove the following result:
\begin{corollary}\label{connected-sum}
Let $M$ be a compact manifold with $\dim M=n \geq 2k+1$ and $M'$ be a
manifold obtained from $M$ by a surgery of codimension $n$.  If the
manifold $M$ admits a Riemannian metric with positive
$\Gamma_k$-curvature, then so does the manifold $M'$.
\end{corollary}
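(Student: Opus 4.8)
The strategy is to apply Hoelzel's Theorem \ref{Hoel} to the curvature condition $C(\Gamma_k^+)$. The discussion preceding this statement already shows that $C(\Gamma_k^+)$ is open and convex; it is moreover a cone, since $\sigma_i(tA)=t^i\sigma_i(A)$ for $t>0$ so that positivity of each $\sigma_i$ is invariant under positive scaling, and it is $O(n)$-invariant, because the eigenvalues of the Schouten tensor are orthogonal invariants and hence each $\sigma_i(A)$ is an $O(n)$-invariant function on $\mathcal{C}_1(\mathbb{R}^n)$. Thus $C(\Gamma_k^+)$ is a curvature condition in the sense required by Theorem \ref{Hoel}.

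To invoke Theorem \ref{Hoel} I must produce a value of $c$ with $3\leq c\leq n$ for which the standard product $S^{c-1}\times\mathbb{R}^{n-c+1}$ lies in $C(\Gamma_k^+)$. Since a surgery of codimension $n$ is exactly a connected sum, it suffices to take $c=n$, that is, the model $S^{n-1}\times\mathbb{R}$, and to conclude that surgeries of codimension $n$ preserve $C(\Gamma_k^+)$. First I would compute the Schouten tensor of $S^{n-1}\times\mathbb{R}$: the unit sphere factor has $\Ric=(n-2)\hat g$ while the line factor is flat, so $\Scal=(n-1)(n-2)$, and substituting into $A=\frac{1}{n-2}(\Ric-\frac{\Scal}{2(n-1)}g)$ shows that $A$ has eigenvalue $\frac12$ with multiplicity $n-1$ along the sphere directions and eigenvalue $-\frac12$ with multiplicity one along the $\mathbb{R}$-factor. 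The radius of the sphere is immaterial here, as $C(\Gamma_k^+)$ is a cone.

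Separating the monomials in $\sigma_i(A)$ according to whether they involve the single negative eigenvalue gives
\begin{equation*}
\sigma_i(A)=\frac{1}{2^i}\left(\binom{n-1}{i}-\binom{n-1}{i-1}\right),
\end{equation*}
and since $\binom{n-1}{i}/\binom{n-1}{i-1}=(n-i)/i>1$ precisely when $2i<n$, we obtain $\sigma_i(A)>0$ exactly when $2i<n$. The hypothesis $n\geq 2k+1$ yields $2i\leq 2k<n$ for every $i$ with $1\leq i\leq k$, so $S^{n-1}\times\mathbb{R}$ has positive $\Gamma_k$-curvature. Theorem \ref{Hoel} with $c=n$ then guarantees that any manifold obtained from $M$ by a surgery of codimension $n$ carries a metric with positive $\Gamma_k$-curvature, which is the assertion of the corollary.

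The conceptual ingredients, namely the convexity and $O(n)$-invariance of the cone $C(\Gamma_k^+)$ and Hoelzel's surgery theorem, are already available, so the only genuine work is the model computation. The mild obstacle is to carry the eigenvalue bookkeeping through correctly and to observe that the elementary inequality $2i<n$ governing positivity of $\sigma_i$ on the model is matched exactly by the dimension hypothesis $n\geq 2k+1$; no optimization over the sphere radius is required precisely because the condition is scale-invariant.
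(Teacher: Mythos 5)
Your proof is correct and follows the same overall route as the paper: check that $C(\Gamma_k^+)$ is a curvature condition and apply Hoelzel's Theorem \ref{Hoel} with $c=n$ and the model $S^{n-1}\times\mathbb{R}$. The only divergence is in how the model is verified: the paper uses conformal flatness to identify $\sigma_i$ with the Gauss--Bonnet curvature $h_{2i}$ (Proposition \ref{conf-flat}) and then quotes the positivity of $h_{2i}(S^{n-1})$ for $n-1\geq 2i$, whereas you compute the Schouten eigenvalues $\pm\tfrac12$ directly and get $\sigma_i=\frac{1}{2^i}\bigl(\binom{n-1}{i}-\binom{n-1}{i-1}\bigr)$, positive exactly when $2i<n$ --- the same threshold. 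Your version is self-contained and more elementary at that step; the paper's reuses machinery it had already set up for conformally flat metrics.
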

\begin{remark}
Corollary \ref{connected-sum} is equivalent to the stability of the
$\Gamma_k$-curvatures under connected sums. This was first proved by
Guan-Lin-Wang in \cite{GLW}.
\end{remark}
\begin{proof}[Proof of Corollary \ref{connected-sum}]
The standard Riemannian product $S^{n-1}\times \Bbb{R}$ is conformally
flat, and therefore, its $\sigma_k$-curvature coincides with the
$h_{2k}$-curvature; see Proposition \ref{conf-flat}. 

The $h_{2k}$-curvature of $S^{n-1}\times \Bbb{R}$ equals the
$h_{2k}$-curvature of $S^{n-1}$ which is positive for $n-1\geq 2k$.
The corollary follows then from Theorem \ref{Hoel}.
\end{proof}
Next we discuss the stability under $1$-dimensional surgeries of  positive
$\Gamma_k$-curvature.
\begin{corollary}\label{one-surgery}
Let $M$ be a compact manifold with $\dim M=n$, and let $k\geq 2$ be a
positive integer such that $2k<n+1-\sqrt{n-\frac{1}{n-1}}$.  Let $M'$
be a manifold obtained from $M$ by a surgery of codimension $n-1$ or
$n$.  If the manifold $M$ admits a Riemannian metric with positive
$\Gamma_k$-curvature, then so does the manifold $M'$.
\end{corollary}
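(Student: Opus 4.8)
The plan is to deduce this from Hoelzel's surgery theorem (Theorem~\ref{Hoel}) applied with $c=n-1$. We have already noted that $C(\Gamma_k^+)$ is an open, $O(n)$-invariant cone, and its convexity was established in the paragraphs preceding this corollary; hence $C(\Gamma_k^+)$ is a curvature condition in the required sense. Since a surgery of codimension $n-1$ or $n$ is precisely a surgery of codimension at least $c=n-1$, by Theorem~\ref{Hoel} it is enough to verify that the standard Riemannian product $S^{c-1}\times\mathbb{R}^{n-c+1}=S^{n-2}\times\mathbb{R}^2$ has positive $\Gamma_k$-curvature, i.e. that $\sigma_i>0$ for all $1\le i\le k$.

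First I would compute the Schouten tensor of $S^{n-2}\times\mathbb{R}^2$ (with unit sphere factor, so $\Ric=(n-3)\hat g$ on the sphere and $\Ric=0$ on the flat factor). Its eigenvalues are $a=\frac{n(n-3)}{2(n-1)(n-2)}$ with multiplicity $n-2$ (sphere directions) and $b=-\frac{n-3}{2(n-1)}$ with multiplicity $2$ (flat directions); in particular $a>0>b$ and the ratio $r:=|b|/a=\frac{n-2}{n}$. Encoding the elementary symmetric functions through the generating polynomial $(1+at)^{n-2}(1-|b|t)^2$ gives
\[
\sigma_i=a^i\left(\binom{n-2}{i}-2r\binom{n-2}{i-1}+r^2\binom{n-2}{i-2}\right),
\]
so the sign of $\sigma_i$ is the sign of the bracket, a quadratic expression in $r$.

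The key computation is then to clear denominators and simplify. Writing $m=n-2$, substituting $r=\frac{m}{m+2}$ and multiplying the bracket by the positive quantity $\frac{(m+2)^2(m-i+1)(m-i+2)}{\binom{m}{i}}$ reduces, after expansion, the sign of $\sigma_i$ to the sign of
\[
P(i)=4(n-1)i^2-4(n-1)(n+1)i+n^3 .
\]
The hard part here is purely algebraic: carrying out the expansion cleanly and recognizing the simplification. The decisive point is that the (reduced) discriminant of $P$ collapses: using $\frac{n^3}{n-1}=n^2+n+1+\frac{1}{n-1}$ one finds $(n+1)^2-\frac{n^3}{n-1}=n-\frac{1}{n-1}$, so that the roots of $P$ are exactly
\[
i_\pm=\frac{(n+1)\pm\sqrt{\,n-\tfrac{1}{n-1}\,}}{2}.
\]

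Since the leading coefficient $4(n-1)$ is positive, $P(i)>0$ precisely when $i<i_-$ or $i>i_+$, and hence $\sigma_i>0$ under the same condition. The hypothesis $2k<n+1-\sqrt{n-\frac{1}{n-1}}$ is exactly $k<i_-$, so every $i$ with $1\le i\le k$ satisfies $i\le k<i_-$, whence $\sigma_i>0$. Thus $S^{n-2}\times\mathbb{R}^2$ has positive $\Gamma_k$-curvature, and Theorem~\ref{Hoel} yields the stability of positive $\Gamma_k$-curvature under surgeries of codimension $n-1$ and $n$, which is the assertion. As a consistency check, specializing to $k=2$ recovers the threshold $n\ge 6$ already obtained for $\sigma_2$ on $S^{n-2}\times\mathbb{R}^2$ from the explicit formula earlier in this section.
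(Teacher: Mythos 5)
Your proposal is correct and follows essentially the same route as the paper: apply Hoelzel's theorem with $c=n-1$ after checking that $S^{n-2}\times\mathbb{R}^2$ has positive $\Gamma_k$-curvature, reducing the sign of $\sigma_i$ to the same quadratic $4(n-1)i^2+4(1-n^2)i+n^3$ that appears in the paper (you work with the unrescaled Schouten tensor via a generating polynomial, the paper rescales $A$ to get eigenvalues $n$ and $2-n$, but the computation is identical). Your explicit root/discriminant calculation simply fills in the step the paper dismisses as ``easily seen,'' and it correctly explains where the threshold $\frac{n+1}{2}-\frac{1}{2}\sqrt{n-\frac{1}{n-1}}$ comes from.
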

\begin{proof}
Let $C({\Gamma_k^+})$ be the subset of $\mathcal{C}_1(\Bbb{R}^n)$
consisting of curvature structures with positive
$\Gamma_k$-curvature. It results from the above discussion that is a
curvature condition.

Next, we are going to show that the $\Gamma_k$-curvature of the
standard product $S^{n-2}\times \mathbb{R}^2$ is positive for $k$ as
in the corollary. Therefore, the result will follow immediately from
the above theorem of Hoelzel.

Let $\bar{A}=\frac{2(n-1)(n-2)}{n-3}A$. Then the operator $\bar{A}$
has only two distict eigenvalues: $\lambda_1=n$ with multiplicity
$n-2$, and $\lambda_2=2-n$ with multiplicity $2$. A straightforward
computation shows that the $\sigma_k$-curvature is given by
$$
\sigma_k(\bar{A})=\frac{(n-2)!n^{k-2}}{(n-k-2)!(k-2)!}\left\{\frac{n^2}{k(k-1)}-\frac{2n(n-2)}{(k-1)(n-k-1)}+\frac{(n-2)^2}{(n-k)(n-k-1)}\right\}.
$$ 
It is easy to see that the sign of $\sigma_k(\bar{A})$ is
determined by the expression:
$$
(n-1)(n^3-4kn^2+4k^2n+4k-4k^2)=(n-1)\bigl(4(n-1)k^2+4(1-n^2)k+n^3\bigr).
$$ The second factor in the previous product is quadratic in $k$ and
can easily be seen positive for $k$ and $n$ as in the corollary.
\end{proof}
As a consequence of Corollary \ref{one-surgery}, we
have:
\begin{corollary}
Let $\pi$ be a finitely presented group, and $k, n$ are arbitrary
positive integers satisfying $2\leq
k<\frac{n+1}{2}-\frac{1}{2}\sqrt{n-\frac{1}{n-1}}$.  Then there exists
a compact $n$-manifold M with positive $\Gamma_k$-curvature such that
$\pi_1(M)=\pi$.
\end{corollary}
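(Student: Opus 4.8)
The plan is to follow the proof of Theorem C verbatim, with the single change that the surgery input is now supplied by Corollaries \ref{connected-sum} and \ref{one-surgery} in place of Theorem \ref{surgery}. These give stability of positive $\Gamma_k$-curvature under connected sums (codimension-$n$ surgery, valid for $n\geq 2k+1$) and under surgeries of codimension $n-1$ (valid in exactly the stated range for $k$ and $n$), respectively.

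Before starting, I would record the elementary inequality that the hypothesis forces $n-1\geq 2k$. Indeed, for $n\geq 2$ one has $(n-1)^2\geq 1$, hence $n-\tfrac{1}{n-1}\geq 1$ and $\sqrt{n-\tfrac{1}{n-1}}\geq 1$; therefore $2k< n+1-\sqrt{n-\tfrac{1}{n-1}}\leq n$, so that $2k\leq n-1$. In particular $n\geq 2k+1$, which is precisely the dimension hypothesis of Corollary \ref{connected-sum}.

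The building block is $S^1\times S^{n-1}$ with its standard product metric. This metric has vanishing Weyl tensor, i.e. it is conformally flat, so by Proposition \ref{conf-flat} each $\sigma_i$-curvature is a positive multiple of the Gauss--Bonnet curvature $h_{2i}$. Since the metric is locally isometric to $S^{n-1}\times\R$, we have $h_{2i}=h_{2i}(S^{n-1})>0$ whenever $n-1\geq 2i$; as $n-1\geq 2k$, all of $\sigma_1,\dots,\sigma_k$ are positive, so $S^1\times S^{n-1}$ carries a metric of positive $\Gamma_k$-curvature. Now write a presentation of $\pi$ with generators $x_1,\dots,x_m$ and relations $r_1,\dots,r_\ell$, and set $N:=\#m\,(S^1\times S^{n-1})$. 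By van Kampen's theorem $\pi_1(N)$ is free on $x_1,\dots,x_m$, and since each connected sum is a codimension-$n$ surgery, Corollary \ref{connected-sum} endows $N$ with positive $\Gamma_k$-curvature. Finally, represent each $r_j$ by a smoothly embedded circle in $N$ with trivial normal bundle and perform surgery on these $\ell$ circles in succession. Each such surgery has codimension $n-1$, so Corollary \ref{one-surgery} keeps the $\Gamma_k$-curvature positive, and the resulting closed $n$-manifold $M$ satisfies $\pi_1(M)\cong\langle x_1,\dots,x_m\mid r_1,\dots,r_\ell\rangle=\pi$, as required.

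The only genuinely non-formal points are the two just used: the arithmetic reduction to $n-1\geq 2k$, which is what simultaneously makes the building block positive and unlocks Corollary \ref{connected-sum}; and the topological fact that each group relation is realizable by an embedded circle with trivial normal bundle (routine for $n\geq 3$ on an orientable manifold, where normal bundles of circles are automatically trivial), so that the relation-killing surgeries have codimension exactly $n-1$ and fall under Corollary \ref{one-surgery}. Everything else is a direct transcription of the proof of Theorem C.
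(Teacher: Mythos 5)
Your proposal is correct and is essentially the argument the paper intends: the paper states this corollary as an immediate consequence of Corollary \ref{one-surgery}, the implicit proof being exactly the proof of Theorem C with the surgery input replaced by Corollaries \ref{connected-sum} and \ref{one-surgery}, which is what you wrote. Your added check that the hypothesis forces $n-1\geq 2k$ (so that $S^1\times S^{n-1}$ has positive $\Gamma_k$-curvature and the connected-sum corollary applies) is a detail the paper leaves implicit, and it is correct.
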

In particular, there are no restrictions on the fundamental group of a
compact $n$-manifold of positive $\Gamma_k$-curvature in the
following cases:
\begin{itemize}
\item $n\geq 6$ and $k=2$.
\item $n\geq 8$ and $k=3$.
\item $n\geq 11$ and $k=4$.
\end{itemize}
On the other hand, a result of Guan-Viaclovsky-Wang
  \cite{GVW} asserts that positive $\Gamma_k$-curvature on an
  $n$-manifold implies positive Ricci curvature if $k\geq n/2$. In
  particular, the fundamental group of a compact $n$-manifold of
  positive $\Gamma_k$-curvature is finite provided that $k\geq n/2$.

{\bf Open Question.}  Are there any restrictions on the fundamental
group of a compact $n$-manifold of positive $\Gamma_k$-curvature if
$k\geq 2$ belongs to the following gap:
$$
\begin{array}{c}
\frac{n+1}{2}-\frac{1}{2}\sqrt{n-\frac{1}{n-1}}<k<n/2 \ ?
\end{array}
$$
\subsection{A final remark} 
In this last subsection we remark that the following surgery theorem
is not a consequence of the general surgery theorem of Hoelzel.
\begin{corollary}[\cite{Labbisgbc}]\label{h2k-posit}
Let $M$ be a compact manifold with $\dim M\geq 5$ and $M'$ be a
manifold obtained from $M$ by a surgery of codimension at least $5$.
If the manifold $M$ admits a Riemannian metric with positive second
Gauss-Bonnet curvature, then so does the manifold $M'$.
\end{corollary}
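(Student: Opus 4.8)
The first thing to settle is why the general surgery theorem of Hoelzel (Theorem \ref{Hoel}) does \emph{not} cover this case, since that dictates the method. Writing a curvature structure as $R=\omega_2+g\omega_1+g^2\omega_0$ in the orthogonal decomposition of $\mathcal{C}_1(\R^n)$ and combining $h_4=|W|^2+2(n-2)(n-3)\sigma_2$ with the formula for $\sigma_2$ recorded above (and $|W|^2=\|\omega_2\|^2$), one finds
\[
h_4(R)=\|\omega_2\|^2-(n-3)\|g\omega_1\|^2+\tfrac{(n-2)(n-3)}{2}\|g^2\omega_0\|^2 .
\]
Thus $\{h_4>0\}$ is the \emph{exterior} of a cone: for any nonzero pure-Weyl structure $R=\omega_2$ both $R$ and $-R$ satisfy $h_4>0$, yet their midpoint $0$ has $h_4=0$. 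Hence $\{h_4>0\}$ is an open, $O(n)$-invariant cone that is \emph{not convex}, so Theorem \ref{Hoel} does not apply and the convexity/concavity arguments used for $C(\Gamma_k^+)$ are unavailable. Restricting to conformally flat structures ($\omega_2=0$) does recover the convex condition $\sigma_2>0$, but the metrics produced in the interior of a surgery handle are not conformally flat, so this observation does not help.

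The plan is therefore to carry out a direct Gromov--Lawson surgery construction, as in \cite{Labbisgbc}, tracking the full curvature tensor. Since $h_4>0$ is an open condition and the surgery only alters $g$ inside a tubular neighbourhood $S^p\times D^{q}$ of the surgery sphere ($q=n-p\geq 5$), it suffices to produce on this tube a metric that agrees with $g$ near the outer boundary, has a standard product form $S^p\times S^{q-1}(\varepsilon)\times[0,1]$ near the core, and satisfies $h_4>0$ throughout. I would deform $g$ by the usual bending: realise the tube as a hypersurface in $M\times\R$ obtained by pushing the concentric distance spheres of $D^{q}$ along a profile curve $\gamma(s)=(r(s),t(s))$ in the $(r,t)$-plane, chosen to start vertical (recovering the original metric) and to end horizontal at a small radius $\varepsilon$ (producing the product neck), and finally cap the neck off by the standard metric on $D^{p+1}\times S^{q-1}(\varepsilon)$.

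The heart of the matter is the quantitative estimate showing $h_4>0$ persists along the bending. Using the double-form formalism of \cite{Lab-min,Lab-ident} to expand $h_4=\ast\frac{1}{(n-4)!}g^{n-4}R^2$ for the bent metric via the Gauss equation, the dominant contribution comes from the intrinsic Gauss--Bonnet curvature of the small cross-sphere $S^{q-1}(\varepsilon)$, which grows like $\binom{q-1}{4}\varepsilon^{-4}$ and is positive \emph{precisely when} $q-1\geq 4$, i.e.\ in codimension $q\geq 5$. The remaining terms --- those involving the curvature $\kappa=|\gamma''|$ of the profile, the principal curvatures of the distance spheres in $(M,g)$, and the ambient curvature of $M$ --- are of strictly lower order in $\varepsilon^{-1}$, so for $\varepsilon$ small and $\gamma$ chosen with controlled bending they are dominated by the $\varepsilon^{-4}$ term. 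On the product neck and on the cap the metric is a product with $S^{q-1}(\varepsilon)$, whose $h_4$ one checks directly to be positive for $q-1\geq 4$.

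The main obstacle is exactly this curvature bookkeeping. Because $h_4$ is quadratic in $R$, expanding it for the bent metric produces cross terms between the large ($\sim\varepsilon^{-2}$) cross-sphere curvature and the bending/ambient terms; one must verify that no such cross term is of order $\varepsilon^{-4}$ with a bad sign and that the genuine $\varepsilon^{-4}$ part is a positive multiple of the intrinsic $h_4$ of $S^{q-1}$. Unlike the convex setting, no soft argument is available here: the estimate must be pushed through explicitly, and it is the dimension count $q-1\geq 4$ that forces the decisive positive term to appear, which is what explains the codimension-at-least-five hypothesis.
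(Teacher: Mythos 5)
Your proposal matches the paper's treatment of this statement: the paper offers no proof of the corollary beyond citing \cite{Labbisgbc}, whose argument is precisely the Gromov--Lawson bending estimate you outline, and the paper's only added content here is the observation --- which you also establish correctly via the formula $h_4(R)=\|\omega_2\|^2-(n-3)\|g\omega_1\|^2+\tfrac{(n-2)(n-3)}{2}\|g^2\omega_0\|^2$ --- that Theorem \ref{Hoel} is inapplicable because $C(h_{4}^+)$ is not convex (and, as the paper further notes, fails the inner cone condition). Your identification of the codimension-five threshold with the positivity of $h_4$ on $S^{q-1}$ for $q-1\geq 4$ agrees with the paper's computation $h_{2r}(S^{c-1})=\frac{(c-1)!}{2^r(c-1-2r)!}$, so the sketch is consistent with, and somewhat more detailed than, what the paper itself records.
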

Denote by $C(h_{4}^+)$ be the subset of the space
$\mathcal{C}_1(\Bbb{R}^n)$ consisting of curvature structures with
$h_{4}>0$. We notice that the $h_{2r}$-curvature of the standard
Riemannian product $S^{c-1}\times \mathbb{R}^{n-c+1}$ is equal to the
$h_{2r}$-cuvature of the standard unit sphere $S^{c-1}$, which is
equal to $\frac {(c-1)!}{2^r (c-1 -2r)!}$ for $c-1\geq 2r$; see
\cite[Examples 2.1 and 2.2]{Labbisgbc}.  Consequently, the
$h_{2r}$-curvatures of the standard Riemannian product $S^{c-1}\times
\mathbb{R}^{n-c+1}$ are positive for $1\leq r \leq k$, provided
$c-1-2k\geq 0$.

However let us emphasize here that the set $C(h_{4}^+)$ is not a
convex subset of $\mathcal{C}_1(\Bbb{R}^n)$ (see the figure below),
and therefore Theorem \ref{Hoel} does not apply.

\begin{figure}[htb!]
\begin{picture}(0,1)
\put(141,50){{\small $h_4\!<\!0$}} \put(185,35){{\small $gA_1$}}
\put(10,-5){{\small $W$}} \put(90,135){{\small ${\mathbb R} g^2$}}
\put(112,90){{\small $h_4>0$}} \put(157,125){{\small
    $\mathcal{C}_1(\Bbb{R}^n)=W\oplus gA_1\oplus {\mathbb R} g^2$}}
\end{picture}
\includegraphics[height=2in]{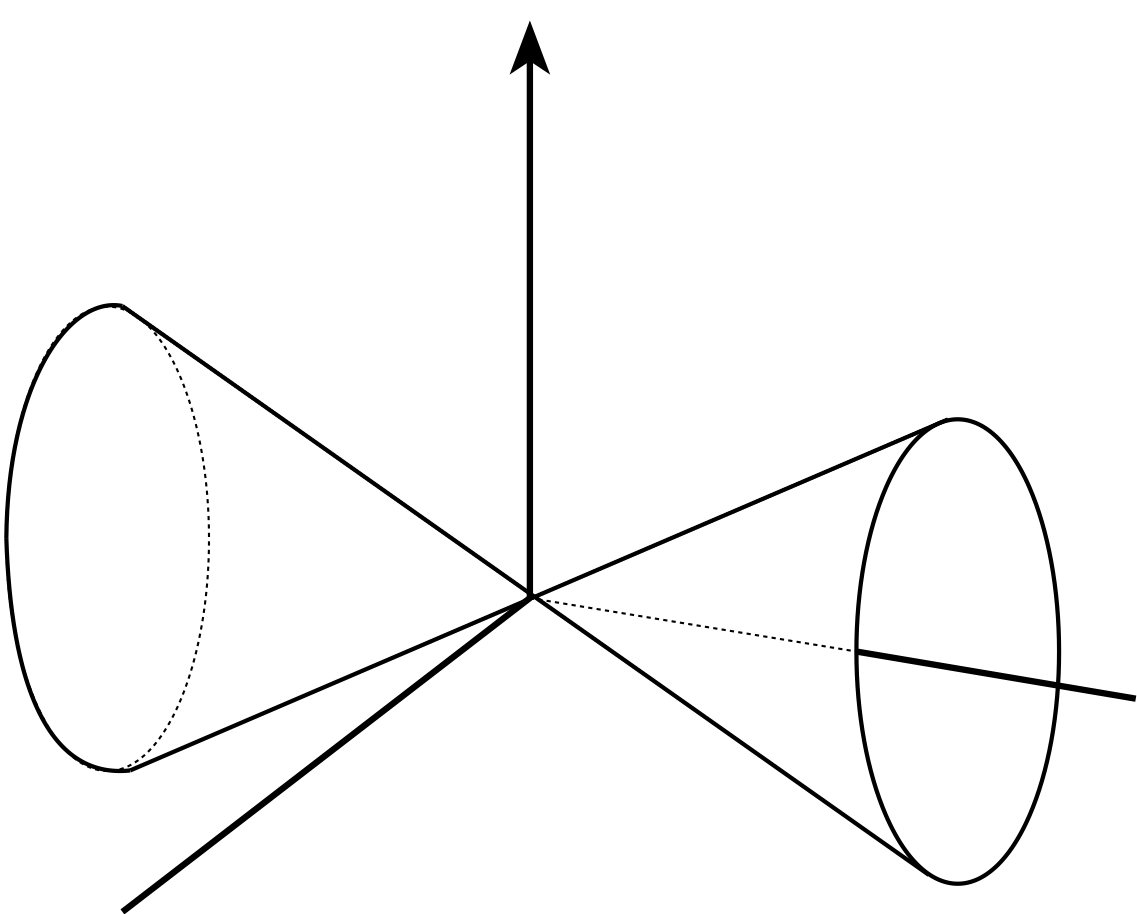}
\caption{The subsets  $h_4=0,h_4>0$ and $h_4< 0$.}\label{fig:h4}
\end{figure}

\begin{remark}
 In contrast with the cone $\sigma_2>0$, the cone $C(h_{4}^+)$
 consisting of curvature structures of positive $h_4$ has only one
 connected component, that is a connected set
\end{remark}
Hoelzel proved a more general version of the surgery theorem, namely
\cite[Theorem B]{Hoelzel}, where the convexity condition is replaced
by an \emph{inner cone condition} with respect to the standard
Riemannian curvature structure, say $S_4$, of the standard product
$S^{4}\times \mathbb{R}^{n-4}$, see \cite{Hoelzel}.

We recall that a non-empty open and $O(n)$-invariant subset $C$ of
$\mathcal{C}_1(\Bbb{R}^n)$ is said to satisfy \emph{an inner cone
  condition} with respect to $S\in \mathcal{C}_1(\Bbb{R}^n)$ if for
any $R$ in $C$, there exists a positive real number $\rho(R)$, that
depends continuously on $R$, such that
$$R+C_{\rho}:=\{R+T:T\in C_{\rho}\}\subset C,$$ where $C_{\rho}$ is an
  open convex $O(n)$-invariant cone that contains the ball
  $B_{\rho}(S)$ of radius $\rho$ in $\mathcal{C}_1(\Bbb{R}^n)$.

However, as one can realize from the figure below, the cone
$C(h_{4}^+)$ does not satisfy an inner cone condition with respect to
$S_4$.

\begin{figure}[htb!]
\begin{picture}(0,1)
\put(164,133){{\small $C_{\rho}$}}
\put(235,39){{\small $g A_1$}} \put(40,-5){{\small $W$}}
\put(130,187){{\small ${\mathbb R} g^2$}} 
\put(-17,90){{\small \begin{tabular}{|c|} \hline
        $h_4=0$\\ \hline\end{tabular}}} 
\put(60,43){{\small $R$}}
\put(80,133){{\small $R+C_{\rho}$}}
\put(25,93){\vector(1,0){13}}
\end{picture}
\includegraphics[height=2.7in]{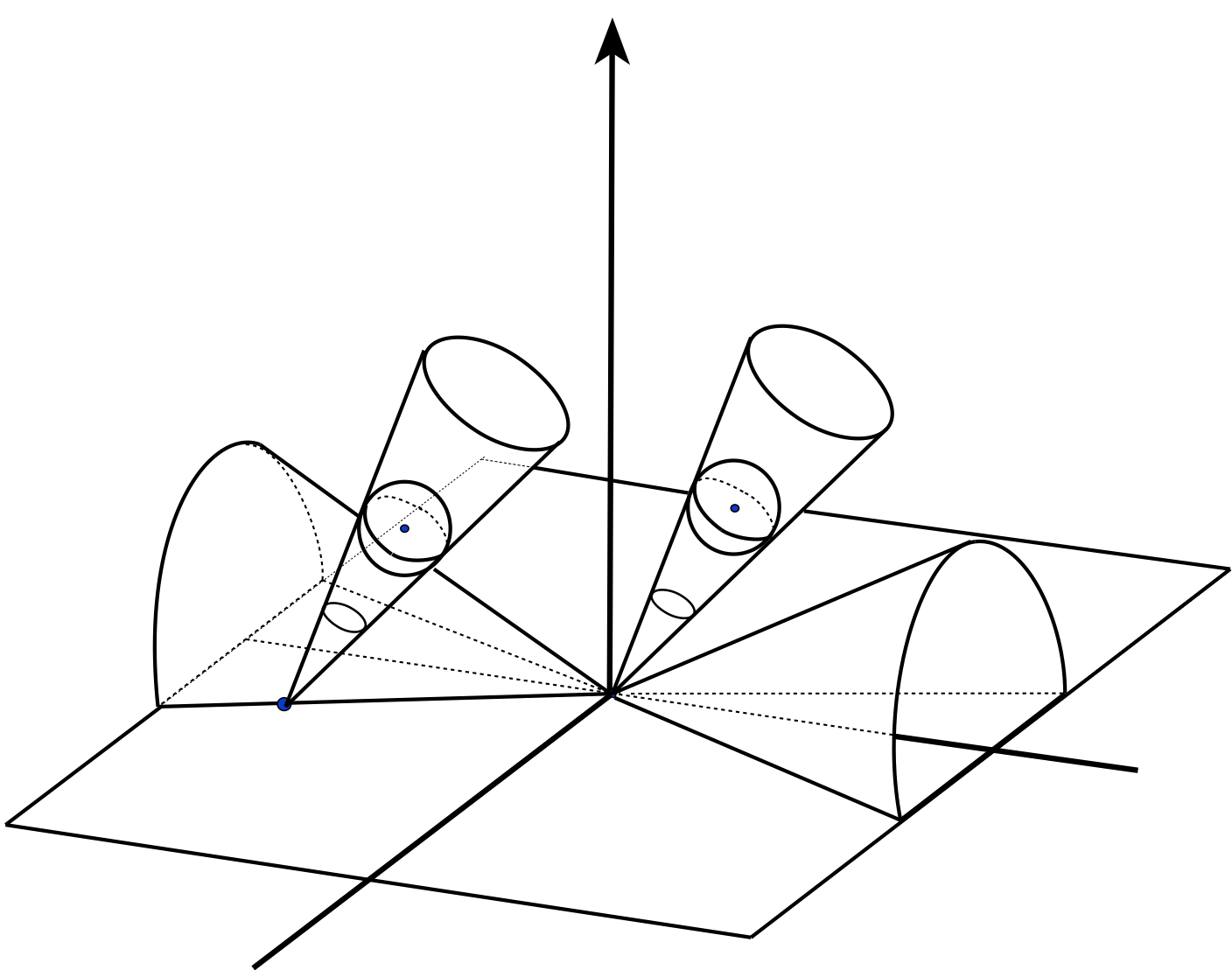}
\caption{The cone   $C(h_{4}^+)$  does not satisfy an inner cone condition.}
\label{fig:curv2}
\end{figure}

\end{document}